\documentclass[a4paper,11pt]{amsart}

\usepackage{amsmath,amssymb}
\usepackage{mathtools}
\usepackage{mathrsfs}
\usepackage{stmaryrd}
\usepackage[pagebackref,
    ,pdfborder={0 0 0}
    ,urlcolor=black,a4paper,hypertexnames=false]{hyperref}
\hypersetup{pdfauthor={Clara L\"oh, Giovanni Sartori},pdftitle=Integral foliated simplicial volume and ergodic decomposition}

\usepackage[initials, backrefs]{amsrefs}
\makeatletter
\def\print@backrefs#1{%
    \space\SentenceSpace{\color{black!60}Cited on page:~\csname br@#1\endcsname}%
}
\makeatother

\usepackage{pgf}
\usepackage{tikz}
\usetikzlibrary{decorations.pathmorphing,calc}
\tikzstyle{every picture}+=[remember picture]

\usepackage{tikz-cd}

\newtheorem{thm}{Theorem}[section]
\newtheorem{prop}[thm]{Proposition}
\newtheorem{lem}[thm]{Lemma}
\newtheorem{cor}[thm]{Corollary}

\newtheorem{question}[thm]{Question}

\theoremstyle{remark}
\newtheorem{rem}[thm]{Remark}

\theoremstyle{definition}
\newtheorem{defi}[thm]{Definition}

\usepackage{amsfonts}
\newcommand{\Z}{\mathbb{Z}}
\newcommand{\Q}{\mathbb{Q}}
\newcommand{\R}{\mathbb{R}}
\newcommand{\N}{\mathbb{N}}

\DeclareMathOperator{\id}{id}

\DeclareMathOperator{\map}{map}

\DeclareMathOperator{\Linf}{L^\infty}
\DeclareMathOperator{\B}{B}

\DeclareMathOperator{\spanz}{Span_\Z}
\DeclareMathOperator{\fc}{FC}

\DeclareMathOperator{\Erg}{Erg}
\DeclareMathOperator{\Prob}{Prob}

\DeclareMathOperator{\ess}{ess}

\def\fa#1{%
  \forall_{#1}\;\;\;}

\def\args{\;\cdot\;}

\def\longrightarrow{\rightarrow}
\def\longmapsto{\mapsto}
\renewcommand{\epsilon}{\varepsilon}
\renewcommand{\phi}{\varphi}
\def\subseteq{\subset}
\def\varnothing{\emptyset}

\def\ucov#1{%
  \widetilde{#1}
}

\def\symmdiff{\mathbin{\triangle}}

\def\linfz#1{\Linf(#1,\Z)}
\def\bz#1{\B(#1,\Z)}
\def\nz#1{\text{N}(#1,\Z)}

\def\actson{\curvearrowright}

\def\qor{\quad\text{or}\quad}

\makeatletter
\newcommand\norm{\bBigg@{0.8}}
\makeatother

\newcommand{\ifsv}[2][norm]{\csname #1l\endcsname\bracevert\!#2\!%
                            \csname #1r\endcsname\bracevert}
\newcommand{\ifsvp}[3][norm]{\csname #1l\endcsname\bracevert\!#2\!%
                            \csname #1r\endcsname\bracevert\!^{#3}}

\newcommand{\essn}[2][norm]{\csname #1l\endcsname\vert #2 \csname#1r\endcsname\vert_{1,\ess}}

\DeclarePairedDelimiter{\abs}{\lvert}{\rvert}


\usepackage{color}
\usepackage{pdfcolmk}

\def\draftinfo{}

\author[C.~L\"oh]{Clara L\"oh}
\address{Fakult\"at f\"ur Mathematik\\
         Universit\"at Regensburg\\
         93040~Regensburg\\
         }
\email{clara.loeh@mathematik.uni-r.de}

\author[G.~Sartori]{Giovanni Sartori}
\address{Maxwell Institute and Department of Mathematics\\
         Heriot-Watt University\\
          Edinburgh~EH14~4AS\\
         }
\email{gs2057@hw.ac.uk}

\title[Integral foliated simplicial volume and ergodic decomposition]{Integral foliated simplicial volume\\ and ergodic decomposition}
\date{\today.\ \copyright{\ C.~L\"oh, G.~Sartori 2022}. 
    This work was supported by the CRC~1085 \emph{Higher Invariants} 
    (Universit\"at Regensburg, funded by the DFG) and is partially
    based on GS's MSc project.  
    \draftinfo\\
     MSC~2010 classification: 55N10, 55N35, 28D15} 
\begin{document}

\begin{abstract}
  We establish an integration formula for integral
  foliated simplicial volume along ergodic decompositions.
  This is analogous to the ergodic decomposition formula
  for the cost of groups. 
\end{abstract}

\maketitle
\vspace{-\baselineskip}

\section{Introduction}

The integral foliated simplicial volume is a dynamical version of the
simplicial volume of manifolds: It measures the size of fundamental
cycles of a manifold~$M$ with respect to twisted coefficients
in~$\linfz {X,\mu}$, where $\pi_1(M) \actson (X,\mu)$ is a probability
measure preserving action on a standard Borel probability space (see
Section~\ref{sec:ifsv} for the definitions). The integral foliated
simplicial volume provides upper bounds for the $L^2$-Betti
numbers~\citelist{\cite{gromov_metric}*{p.~305f}\cite{mschmidt}} and
the cost of the fundamental group~\cite{loeh_cost}.

In the case of a residually finite fundamental group, a dynamical system
of particular interest is the profinite completion. The cost of the
action on the profinite completion is the rank
gradient~\cite{abertnikolov}. Analogously, the integral foliated
simplicial volume with respect to the profinite completion equals the
stable integral simplicial
volume~\cite{loehpagliantini}*{Remark~6.7}. The stable integral
simplicial volume is one of the few known upper bounds for logarithmic
torsion homology growth~\cite{FLPS}*{Theorem~1.6}. However, as in the
fixed price problem for the cost of groups, it is unknown in general
whether different essentially free dynamical systems can lead to
different values for the same manifold; in particular, which dynamical
systems can be used for logarithmic torsion growth estimates?

Following the theory for cost~\cite{kechris_global}*{Corollary~10.14},
it has been shown that the integral foliated simplicial volume is
monotonic with respect to weak containment of dynamical
systems~\cite{FLPS}*{Theorem~1.5} and that for several classes of
manifolds all essentially free dynamical systems lead to the same
value~\citelist{\cite{FLPS}*{Theorem~1.9}\cite{fauserS1}\cite{loehmoraschinisauer}}.

In the present paper, in analogy with the ergodic decomposition
formula for the cost of
groups~\cite{kechrismiller}*{Proposition~18.4}, we show in
Section~\ref{sec:proof}:

\begin{thm}\label{thm:ifsvergdec}
  Let $M$ be an oriented closed connected manifold with fundamental
  group~$\Gamma$, let $(\alpha,\mu)\colon\Gamma \actson X$ be a
  standard probability action, and let $\beta \colon X \longrightarrow
  \Erg(\alpha)$ be an ergodic decomposition of~$(\alpha,\mu)$. Then
  \[ 
  \ifsvp{M}{(\alpha,\mu)}
  = \int_X \ifsvp{M}{(\alpha,\beta_x)} \;d\mu(x).
  \]
\end{thm}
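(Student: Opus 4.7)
The plan is to establish the identity by proving the two inequalities ``$\geq$'' and ``$\leq$''. The central tool is the norm integration formula
\[
  \|c\|_{\mu} = \int_X \|c\|_{\beta_x}\,d\mu(x),
\]
valid for every chain $c$ in the parametrized complex: it follows from $\mu = \int_X \beta_x\,d\mu(x)$ together with the fact that parametrized chains are finite sums over $\Gamma$-orbit representatives of singular simplices. Here I interpret a chain with $\mu$-coefficients as a chain with $\beta_x$-coefficients via the canonical $\Gamma$-equivariant ring map $L^\infty(X,\mu,\Z) \to L^\infty(X,\beta_x,\Z)$, which is well defined for $\mu$-a.e.\ $x$ because a $\mu$-null set is $\beta_x$-null for $\mu$-a.e.\ $x$.

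For ``$\geq$'', the above coefficient change induces a chain map on parametrized complexes preserving the image of the integral fundamental class, so any fundamental cycle $c$ for $(\alpha,\mu)$ descends to a fundamental cycle $c_x$ for $(\alpha,\beta_x)$ for $\mu$-a.e.\ $x$. The norm integration formula then gives
\[
  \|c\|_\mu = \int_X \|c_x\|_{\beta_x}\,d\mu(x) \geq \int_X \ifsvp{M}{(\alpha,\beta_x)}\,d\mu(x),
\]
and passing to the infimum over $c$ yields the inequality.

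For ``$\leq$'' I would proceed by measurable selection and gluing. Fix $\varepsilon > 0$, a set $\Sigma_n$ of $\Gamma$-orbit representatives of singular $n$-simplices of~$\ucov M$, and a classical integral fundamental cycle $z$ of $M$. A fundamental cycle $c_\nu$ over $(\alpha,\nu)$ is encoded by bounded integer-valued coefficient functions $\{a_\sigma^\nu\}_{\sigma \in \Sigma_n}$ on $X$, finitely many nonzero, together with an $(n+1)$-chain $b_\nu$ witnessing $c_\nu - 1 \otimes z = \partial b_\nu$. I would apply a Kuratowski--Ryll-Nardzewski-type selection theorem on the Polish space $\Erg(\alpha) \subseteq \Prob(X)$ to choose, jointly Borel in~$\nu$, representatives $\tilde a_\sigma^\nu, \tilde b_\tau^\nu \colon X \to \Z$ of such $c_\nu, b_\nu$ with $\|c_\nu\|_\nu \leq \ifsvp{M}{(\alpha,\nu)} + \varepsilon$. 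Using the ergodic decomposition $\beta \colon X \to \Erg(\alpha)$, I then set $g_\sigma(y) := \tilde a_\sigma^{\beta_y}(y)$ and $h_\tau(y) := \tilde b_\tau^{\beta_y}(y)$, which are Borel by joint measurability, and define $c := \sum_\sigma g_\sigma \otimes \sigma$. Since $\beta_y = \beta_x$ for $\mu$-a.e.\ $x$ and $\beta_x$-a.e.\ $y$, the coefficients of $c$ agree $\beta_x$-a.e.\ with those of $c_{\beta_x}$, so the identity $c - 1 \otimes z = \partial(\sum_\tau h_\tau \otimes \tau)$ holds $\beta_x$-a.e.\ for $\mu$-a.e.\ $x$, and therefore $\mu$-a.e., making $c$ a fundamental cycle for $(\alpha,\mu)$. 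The same coincidence yields $\|c\|_{\beta_x} = \|c_{\beta_x}\|_{\beta_x}$ for $\mu$-a.e.\ $x$, so the norm integration formula gives $\|c\|_\mu \leq \int_X \ifsvp{M}{(\alpha,\beta_x)}\,d\mu(x) + \varepsilon$. Letting $\varepsilon \to 0$ concludes.

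The main obstacle is the Borel selection step: the space of parametrized fundamental cycles with a given $L^1$-norm bound is infinite-dimensional, and the $\Z$-valued $L^\infty$-coefficients are controlled only through their $L^1$-mass, so cutting out a Polish parameter space of admissible cycle/witness pairs on which a standard selection theorem applies takes some care. This mirrors the corresponding selection step in the ergodic decomposition proof for cost~\cite{kechrismiller}*{Proposition~18.4}.
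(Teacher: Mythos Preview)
Your ``$\geq$'' direction is essentially the paper's argument (your norm integration formula is Proposition~\ref{prop:chainergdec}), with the caveat that the paper first passes to \emph{strict} chains in~$\bz\alpha$ (bounded Borel functions, not $L^\infty$-classes) so that a single chain can be evaluated against every invariant measure simultaneously, without any ``for $\mu$-a.e.~$x$'' exceptions.

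Your ``$\leq$'' direction has a genuine gap beyond the selection difficulty you already flag. Even granting a jointly Borel choice~$\nu \mapsto (\tilde a_\sigma^\nu, \tilde b_\tau^\nu)$, the glued functions~$g_\sigma(y) = \tilde a_\sigma^{\beta_y}(y)$ need not define a chain at all: as $\nu$ ranges over~$\Erg(\alpha)$, the finite support sets~$\{\sigma : \tilde a_\sigma^\nu \neq 0\}$ and the $L^\infty$-bounds on the~$\tilde a_\sigma^\nu$ are completely uncontrolled, so your~$c = \sum_\sigma g_\sigma \otimes \sigma$ may fail to be a \emph{finite} sum of \emph{bounded} coefficient functions. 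You only bound the $L^1$-mass of~$c_\nu$, which constrains neither the number of simplices used nor the sup-norm of the coefficients. The same issue hits the boundary witnesses~$h_\tau$.

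The paper bypasses measurable selection entirely by a countability reduction. The key step is to produce a \emph{countable} set~$\fc(M,\alpha)$ of strict fundamental cycles in~$\bz\alpha \otimes_{\Z\Gamma} C_*(\ucov M;\Z)$ such that for \emph{every}~$\nu \in \Prob(\alpha)$ the infimum over~$\fc(M,\alpha)$ already computes~$\ifsvp M{(\alpha,\nu)}$. This uses (i) replacing~$C_*(\ucov M;\Z)$ by a countable $\Z\Gamma$-subcomplex via simplicial approximation, and (ii) replacing~$\bz\alpha$ by the $\Z$-span of characteristic functions from a countable $\Gamma$-invariant subalgebra that is dense with respect to every probability measure on~$X$. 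Once~$\fc(M,\alpha) = \{c_0,c_1,\dots\}$ is in hand, measurability of~$x \mapsto \ifsvp M{(\alpha,\beta_x)}$ is a countable infimum, and the gluing becomes elementary: given~$\varepsilon>0$, the sets $A_n = \{x : \ifsvp{c_n}{(\alpha,\beta_x)} \leq \ifsvp M{(\alpha,\beta_x)} + \varepsilon\}$ cover~$X$, are $\Gamma$-invariant and $\beta$-compatible, and a \emph{finite} disjointification~$\overline A_0,\dots,\overline A_N$ (with the small remainder handled by a fixed integral cycle~$c_\Z$) yields the honest strict fundamental cycle $\sum_{n=0}^N \chi_{\overline A_n}\cdot c_n + \chi_{X\setminus A}\cdot c_\Z$. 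Since only finitely many fixed chains are combined, all finiteness and boundedness issues disappear.
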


In particular, there exists an ergodic parameter space
that realises the integral foliated simplicial volume:

\begin{cor}
  Let $M$ be an oriented closed connected manifold with fundamental
  group~$\Gamma$.  Then there exists an essentially free ergodic
  standard $\Gamma$-space~$(\alpha,\mu)$ with
  \[ \ifsv M = \ifsvp M {(\alpha,\mu)}.
  \]
\end{cor}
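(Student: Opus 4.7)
The plan is to first exhibit an essentially free standard $\Gamma$-space $(\alpha,\mu)$ with $\ifsvp{M}{(\alpha,\mu)} = \ifsv M$, and then extract an ergodic realiser via Theorem~\ref{thm:ifsvergdec}.

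For the first step, I would choose a sequence $(\alpha_n,\mu_n)_{n\in\N}$ of essentially free standard $\Gamma$-spaces with $\ifsvp{M}{(\alpha_n,\mu_n)} \to \ifsv M$ (available from the defining infimum) and form the diagonal product $(\alpha,\mu) := \prod_{n\in\N}(\alpha_n,\mu_n)$ on the standard Borel probability space $\prod_n X_n$. Essential freeness is inherited from any single factor: for $\gamma\neq e$ the fixed set in the product is contained in the preimage under the first projection of the $\mu_1$-null fixed set of $\gamma$ in~$X_1$. For each~$n$, the projection $(\alpha,\mu)\to(\alpha_n,\mu_n)$ is $\Gamma$-equivariant and measure-preserving, so pullback of functions defines a norm-preserving $\Z[\Gamma]$-module embedding $\linfz{X_n,\mu_n}\hookrightarrow\linfz{\prod_k X_k,\mu}$. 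Pulling back an almost-optimal fundamental cycle from~$X_n$ gives $\ifsvp{M}{(\alpha,\mu)} \leq \ifsvp{M}{(\alpha_n,\mu_n)}$ (a special case of the weak-containment monotonicity~\cite{FLPS}*{Theorem~1.5}). Letting $n\to\infty$ and combining with the trivial lower bound from the definition yields $\ifsvp{M}{(\alpha,\mu)} = \ifsv M$.

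For the second step, I would apply Theorem~\ref{thm:ifsvergdec} to an ergodic decomposition $\beta\colon X\to\Erg(\alpha)$ of this attaining action:
\[
\ifsv M \;=\; \int_X \ifsvp{M}{(\alpha,\beta_x)}\,d\mu(x).
\]
A Fubini argument using $\mu = \int_X\beta_x\,d\mu(x)$ and countability of $\Gamma$ shows that, for $\mu$-almost every $x$, every $\mu$-null fixed set $\{y:\gamma y = y\}$ with $\gamma\in\Gamma\setminus\{e\}$ is also $\beta_x$-null; hence $(\alpha,\beta_x)$ is essentially free and ergodic for $\mu$-a.e.\ $x$. For each such~$x$ the definition of $\ifsv M$ forces $\ifsvp{M}{(\alpha,\beta_x)} \geq \ifsv M$. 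Since the integrand is bounded below by $\ifsv M$ almost everywhere and integrates to~$\ifsv M$, equality must hold for $\mu$-a.e.~$x$, and any such~$x$ provides the desired ergodic realiser.

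The only non-formal ingredient is the construction of an essentially free attaining space in Step~1, which the product construction dispatches via the elementary factor-map estimate; everything in Step~2 is a direct application of Theorem~\ref{thm:ifsvergdec} together with the standard almost-sure preservation of null sets under ergodic disintegration.
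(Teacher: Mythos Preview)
Your proof is correct and follows essentially the same route as the paper: the paper cites the product construction as \cite[Corollary~4.14]{loehpagliantini} for Step~1 and invokes Remark~\ref{rem:essfree} for the almost-sure essential freeness in Step~2, both of which you spell out directly. One small slip: the defining infimum of~$\ifsv M$ is over \emph{all} standard probability actions, so you cannot immediately choose the approximating~$\alpha_n$ to be essentially free; this is harmless, however, since including a single Bernoulli-shift factor in the product already forces essential freeness without affecting the factor-map estimate.
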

\begin{proof}
  Taking products of standard actions shows that there is an
  essentially free standard probability action~$(\alpha,\mu)\colon \Gamma
  \actson X$ with $\ifsv M = \ifsvp M
  {(\alpha,\mu)}$~\cite[Corollary~4.14]{loehpagliantini}.
  Let $\beta \colon X \longrightarrow \Erg(\alpha)$ be an ergodic
  decomposition for~$(\alpha,\mu)$, as provided by the ergodic
  decomposition theorem~\cite{varadarajan} (Theorem~\ref{thm:ergdec}).
  The ergodic decomposition formula for integral foliated simplicial volume
  (Theorem~\ref{thm:ifsvergdec}) gives
  \[ \ifsv M
  = \ifsvp M {(\alpha,\mu)}
  = \int_X \ifsvp M {(\alpha,\beta_x)} \;d\mu(x).
  \]
  Moreover, by definition, $\ifsvp M {(\alpha,\beta_x)} \geq \ifsv M$ for
  all~$x \in X$.  Therefore, we obtain
  \[ \ifsv M = \ifsvp M {(\alpha,\beta_x)}
  \]
  for $\mu$-almost every~$x \in X$. As $\alpha$ is essentially free,
  there also exists an~$x \in X$ such that the $\Gamma$-action on~$X$
  is essentially free with respect to~$\beta_x$ and simultaneously 
  $\ifsv M = \ifsvp M {(\alpha,\beta_x)}$ is satisfied
  (Remark~\ref{rem:essfree}).  By construction, $\beta_x$ is ergodic.
\end{proof}

For the proof of Theorem~\ref{thm:ifsvergdec}, we first show that for
each standard action~$\alpha \colon \Gamma \actson X$, there is a
\emph{countable} subcomplex~$\Sigma_*(M,X;\Z)$ of the (strict) chain
complex~$\bz X \otimes_{\Z \Gamma} C_*(\ucov M;\Z)$ with the following
property: For \emph{every} $\Gamma$-invariant probability
measure~$\nu$ on~$X$, we have
\begin{align*}
  \ifsvp M {(\alpha,\nu)}
= \inf
  \bigl\{ \ifsvp c {(\alpha,\nu)}
  \bigm|\; & c \in \Sigma_*(M,X;\Z) \\
           & \text{is a fundamental cycle}
  \bigr\}.
\end{align*}
The ergodic decomposition formula can then be viewed as an
instance of switching this particular infimum with integration.

A weaker result relating integral foliated simplicial volume and
ergodic parameter spaces was already known:

\begin{prop}[\protect{\cite[Proposition 4.17]{loehpagliantini}}]
  Let $M$ be an oriented closed connected manifold with fundamental
  group~$\Gamma$. Then, for every~$\varepsilon \in \R_{>0}$, there exists
  an ergodic standard $\Gamma$-space~$(\alpha,\mu)$ such that
  \[
  \ifsvp M {(\alpha,\mu)}\le\ifsv M+\varepsilon.
  \]
\end{prop}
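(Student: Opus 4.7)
The plan is to deduce this proposition as an immediate corollary of Theorem~\ref{thm:ifsvergdec}; the argument is a strict weakening of the proof of the corollary preceding it, and reduces to a single averaging observation.

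First, by the very definition of $\ifsv M$ as an infimum over essentially free standard probability $\Gamma$-spaces, I choose $(\alpha,\mu)\colon\Gamma\actson X$ with
\[
\ifsvp M{(\alpha,\mu)} \le \ifsv M + \varepsilon.
\]
Next, I apply the ergodic decomposition theorem (Theorem~\ref{thm:ergdec}) to produce a measurable $\beta\colon X \longrightarrow \Erg(\alpha)$. Combined with Theorem~\ref{thm:ifsvergdec}, this gives
\[
\int_X \ifsvp M{(\alpha,\beta_x)}\;d\mu(x)
= \ifsvp M{(\alpha,\mu)}
\le \ifsv M + \varepsilon.
\]

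Since $\ifsvp M{(\alpha,\beta_x)} \ge \ifsv M$ for every $x \in X$ by definition of the infimum, the integral inequality above forces the set of $x \in X$ with $\ifsvp M{(\alpha,\beta_x)} \le \ifsv M + \varepsilon$ to have positive $\mu$-measure (if every value were strictly larger than $\ifsv M + \varepsilon$, the average would be too). I select any such $x$; then $(\alpha,\beta_x)$ is, by the ergodic decomposition theorem, an ergodic standard probability $\Gamma$-space, and it satisfies the required bound.

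Given Theorem~\ref{thm:ifsvergdec}, there is no substantive obstacle. The only step worth scrutinising is that $(\alpha,\beta_x)$ really is a legitimate standard probability $\Gamma$-space with $\beta_x$ ergodic, which is precisely the content of the ergodic decomposition theorem cited in the excerpt. Compared with the corollary proved above, this proposition demands only an $\varepsilon$-gap rather than equality, so the present argument bypasses the additional essential freeness step (Remark~\ref{rem:essfree}) needed there; conceptually, once the full ergodic decomposition formula is available, this historically earlier statement collapses to a one-line averaging argument.
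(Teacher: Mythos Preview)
Your deduction from Theorem~\ref{thm:ifsvergdec} is correct, but there is nothing to compare it against: the paper does not supply its own proof of this proposition. It is quoted, with citation to~\cite[Proposition~4.17]{loehpagliantini}, precisely as a \emph{previously known} weaker result that the present paper's main theorem supersedes. The original argument in that reference therefore cannot rely on Theorem~\ref{thm:ifsvergdec}; it is an independent, earlier proof (via an explicit ergodic component of a product action). Your approach is logically sound---and you yourself flag its anachronistic flavour in the final sentence---but as a matter of exposition the paper is citing this proposition for context, not deriving it.

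One minor inaccuracy: you describe $\ifsv M$ as an infimum over \emph{essentially free} standard probability $\Gamma$-spaces, but in this paper the infimum is taken over \emph{all} standard probability actions. This does not affect your argument (you only need some action within~$\varepsilon$ of the infimum, and essential freeness plays no role in the conclusion), but it is worth stating correctly.
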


The following terminology is borrowed from the theory of
cost~\cite{gaboriau_cost}:

\begin{defi}[fixed price~\cite{loeh_cost}*{Definition~1.3}]
  Let $M$ be an oriented closed connected manifold; we say that $M$ has 
  \emph{fixed price} if $\ifsvp M {(\alpha,\mu)} = \ifsvp M {(\alpha',\mu')}$
  holds for all essentialy free standard
  $\Gamma$-spaces~$(\alpha,\mu)$ and~$(\alpha',\mu')$.
\end{defi}

As in the case of cost the fixed price problem for manifolds is still open:

\begin{question}[fixed price problem~\protect{\cite{FLPS}*{Question~1.13}}]
  Do all oriented closed connected manifolds have fixed price?
\end{question}

\subsection{Organisation of this paper}

We recall the definition of integral foliated simplicial volume in
Section~\ref{sec:ifsv}; ergodic decompositions are recalled in
Section~\ref{sec:ergdec}. The proof of Theorem~\ref{thm:ifsvergdec} is
given in Section~\ref{sec:proof}.

\subsection*{Acknowledgements}

We would like to thank the anonymous referee for the constructive
report and for catching several inaccuracies and a major omission
in the main argument.

\section{Integral foliated simplicial volume}\label{sec:ifsv}

Parametrised simplicial volume and integral foliated simplicial
volume arise as a dynamical generalisation of integral and real
simplicial volume: One replaces the constant integral/real
coefficients with twisted coefficients of spaces of (essentially)
bounded integer-valued 
functions~\citelist{\cite{gromov_metric}*{p.~305f}\cite{mschmidt}}.

\subsection{Basic definitions}

A \emph{standard Borel space} is a measurable space that is isomorphic
to a Polish space together with its Borel
$\sigma$-algebra~\cite{kechris2012classical}.

\begin{defi}[standard (probability) actions and bounded functions]
  \hfil
  \begin{itemize}
  \item A \emph{standard action} is a measurable action
    of a countable group on a standard Borel space.
  \item A \emph{standard probability action} is a
    pair~$(\alpha, \mu)$, where $\alpha \colon \Gamma \actson X$
    is a standard action and where $\mu$ is an $\alpha$-invariant
    probability measure on~$X$.
  \item If $(\alpha,\mu) \colon \Gamma \actson X$ is a standard
    probability action, then we equip the $\Z$-module~$\linfz {X,\mu}$
    of $\mu$-equivalence classes of measurable $\mu$-essentially
    bounded functions~$X \to \Z$ with the $\Z\Gamma$-module structure
    \begin{align*}
      \linfz {X,\mu} \times \Gamma & \to \linfz {X,\mu}
      \\
      (f,\gamma) & \mapsto
      \bigl(x \mapsto f(\gamma \cdot x)\bigr).
    \end{align*}
    The resulting $\Z\Gamma$-module is denoted by~$\linfz
    {\alpha,\mu}$.
  \end{itemize}
\end{defi}

In the following, in the notation of functions spaces etc.\
we will always use ``$\alpha$'' instead of~``$X$'' to emphasise
the underlying action instead of the underlying measure space.

Let $M$ be a connected manifold with fundamental group~$\Gamma$ and
universal covering~$\pi \colon \ucov M \to M$.  If $A$ is a right
$\Z\Gamma$-module, then we denote the twisted singular chain complex
and the twisted singular homology of~$M$ with coefficients in~$A$ by
(where $\Gamma$ acts by deck transformations on the singular simplices
of~$\ucov M$):
\begin{align*}
  C_*(M;A) & := A \otimes_{\Z\Gamma} C_*(\ucov M;\Z) \\
  H_*(M;A) & := H_*\bigl( C_*(M;A)\bigr).
\end{align*}
For the constant coefficients~$\Z$, the universal covering map
induces a $\Z$-chain isomorphism between the untwisted
singular chain complex of~$M$ with $\Z$-coefficients
and~$\Z \otimes_{\Z\Gamma} C_*(\ucov M;\Z)$. We will always
use this identification for~$C_*(M;\Z)$.

\begin{defi}[parametrised fundamental cycles]
  Let $M$ be an oriented closed connected manifold with fundamental
  group~$\Gamma$, let $(\alpha,\mu) \colon \Gamma \actson X$
  be a standard probability action. We write
  \[ i_M^{(\alpha,\mu)}\colon C_*(M;\Z)
     \to C_*\bigl(M;\linfz{\alpha,\mu}\bigr)
  \]
  for the chain map induced by the inclusion of~$\Z$ into~$\linfz
  {\alpha,\mu}$ as constant functions. 
  All cycles in~$C_*(M;\linfz{\alpha,\mu})$ representing
  \[ 
  [M]^{(\alpha,\mu)}:=H_*(i_M^{(\alpha,\mu)})\bigl([M]_\Z\bigr)
  \in H_*\bigl(M ; \linfz{\alpha,\mu} \bigr)
  \]
  are called \emph{$(\alpha,\mu)$-parametrised fundamental
  cycles of~$M$}.
\end{defi}

\begin{defi}[integral foliated simplicial volume]
  Let $M$ be an oriented closed connected manifold with fundamental
  group~$\Gamma$ and let $(\alpha,\mu)\colon\Gamma\actson X$ be a
  standard probability action.
  \begin{itemize}
  \item A chain $\sum_{j=1}^mf_j \otimes {\sigma}_j\in
    C_*(M;\linfz{\alpha,\mu})$ is in \emph{reduced form} if for
    all~$j,k\in\{1,\dots,m\}$ with $j\ne k$ we have that
    $\pi\circ{\sigma}_j\ne \pi\circ{\sigma}_k$. In other words the
    singular simplices ${\sigma}_1,\dots,{\sigma}_m$ of~$\ucov{M}$
    arise from different simplices in~$M$. Reduced forms are essentially
    unique (up to the $\Gamma$-action on the simplices).
  \item Let $c=\sum_{j=1}^mf_j\otimes {\sigma}_j\in
    C_*(M;\linfz{\alpha,\mu})$ be in reduced form.
    The \emph{$(\alpha,\mu)$-parametrised $\ell^1$-norm} of~$c$ is
    \[
    \ifsvp{c}{(\alpha,\mu)}=\sum_{j=1}^m\int_X\abs{f_j}\;d\mu\in\R_{\geq 0}.
    \]
  \item The \emph{$(\alpha,\mu)$-parametrised simplicial volume}
    of~$M$ is the infimum
    \begin{align*}
      \ifsvp{M}{(\alpha,\mu)}
      :=\inf\bigl\{\ifsvp{c}{(\alpha,\mu)}
      \bigm|
      & \;\text{$c\in C_*(M;\linfz{\alpha,\mu})$ is an }\\
      & \;\text{$(\alpha,\mu)$-parametrised fundamental cycle}\bigr\}.
    \end{align*}
  \item The \emph{integral foliated simplicial volume}~$\ifsv M$
    of~$M$ is the infimum of the~$\ifsvp{M}{(\alpha,\mu)}$ over all
    isomorphism classes of standard probability
    actions~$(\alpha,\mu)\colon\Gamma \actson X$.
  \end{itemize}
\end{defi}

	If $\zeta\in H_*(M;\linfz{\alpha,\mu})$, then we denote 
	\[
	\ifsvp{\zeta}{(\alpha,\mu)}:=\inf\bigl\{\ifsvp{c}{(\alpha,\mu)}\bigm|\text{$c\in C_*(M;\linfz{\alpha,\mu})$ and $[c]=\zeta$}\bigr\}
	\]
	so that we can in particular express the $(\alpha,\mu)$-parametrised simplicial volume of $M$ as $\ifsvp{[M]^{(\alpha,\mu)}}{(\alpha,\mu)}$.

\begin{prop}[comparison with integral and real simplicial
    volume~{\cite[Proposition 4.6]{loehpagliantini}}]
  \label{prop:comparison}
  Let $M$ be an oriented closed connected manifold with fundamental
  group~$\Gamma$. For every standard probability $\Gamma$-action~$(\alpha,\mu)$,
  we have
  \[
  \lVert M\rVert_{\R}\le\ifsv M\le\ifsvp M {(\alpha,\mu)}\le\lVert M\rVert_{\Z},
  \]
  where $\lVert M\rVert_\R$ and $\Vert M \rVert_\Z$ denote the
  real and integral simplicial volume, respectively.
\end{prop}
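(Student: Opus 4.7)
The plan is to prove the three inequalities separately. The middle inequality~$\ifsv M\le\ifsvp M {(\alpha,\mu)}$ is immediate from the definition of~$\ifsv M$ as an infimum over all standard probability $\Gamma$-actions, so only the outer two require work.

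For the upper bound $\ifsvp M {(\alpha,\mu)}\le\lVert M\rVert_{\Z}$, I would start from an integral fundamental cycle $c=\sum_{j=1}^{m} n_j\sigma_j \in C_*(M;\Z)$ in reduced form (after consolidating repeated simplices) and apply the chain map~$i_M^{(\alpha,\mu)}$ induced by the inclusion of $\Z$ as constant functions into~$\linfz{\alpha,\mu}$. The image $\sum_{j=1}^m n_j\otimes\sigma_j$ is an $(\alpha,\mu)$-parametrised fundamental cycle, still in reduced form, and its parametrised $\ell^1$-norm is
\[
\sum_{j=1}^m |n_j|\cdot \mu(X)=\sum_{j=1}^m |n_j|=\lVert c\rVert_1.
\]
Taking the infimum over all integral fundamental cycles yields the claim.

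For the lower bound $\lVert M\rVert_{\R}\le\ifsv M$, the idea is to integrate out the parameter space. I would define
\[
I\colon \linfz{\alpha,\mu}\longrightarrow \R,\qquad f\longmapsto \int_X f\;d\mu,
\]
and check that $\alpha$-invariance of~$\mu$ makes $I$ into a $\Z\Gamma$-linear map (with trivial $\Gamma$-action on~$\R$). Tensoring with $C_*(\ucov M;\Z)$ over $\Z\Gamma$ yields a chain map $I_*\colon C_*(M;\linfz{\alpha,\mu})\to C_*(M;\R)$ whose restriction along the inclusion of constants is the ordinary inclusion $\Z\hookrightarrow\R$; in particular, $I_*$ sends $(\alpha,\mu)$-parametrised fundamental cycles to real fundamental cycles. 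For a chain $c=\sum_{j=1}^{m} f_j\otimes\sigma_j$ in reduced form, the triangle inequality $\bigl|\int_X f_j\;d\mu\bigr|\le \int_X|f_j|\;d\mu$ gives $\lVert I_*(c)\rVert_1\le\ifsvp{c}{(\alpha,\mu)}$. Taking infima produces $\lVert M\rVert_\R\le\ifsvp M {(\alpha,\mu)}$ for every standard probability action, and hence $\lVert M\rVert_\R\le\ifsv M$.

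None of these steps require substantial new input; the only point that really needs care is the $\Z\Gamma$-equivariance of~$I$, which is exactly the measure-preservation hypothesis. Everything else is a matter of unwinding the definitions and applying the triangle inequality under the integral.
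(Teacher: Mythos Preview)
Your argument is correct and is exactly the standard proof of this fact. Note, however, that the paper does not give its own proof of this proposition: it is simply quoted from~\cite[Proposition~4.6]{loehpagliantini}, so there is nothing in the present paper to compare your argument against. What you wrote is essentially the argument one finds in that reference.
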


Computations of integral foliated simplicial volume have been
performed for various oriented closed connected aspherical
manifolds~\cite{mschmidt,loehpagliantini,FLPS,fauserS1,campagnolocorro,FLMQ,loehmoraschinisauer}.

\subsection{A strict version}

For the proof of Theorem~\ref{thm:ifsvergdec}, we will
need to consider chains and norms with respect to different
probability measures on the same measurable action. In order
to avoid complications caused by sets of measure~$0$ with
respect to different measures, we explain how to compute
the integral foliated simplicial volume via strict chains.

\begin{defi}[bounded functions]
  If $\alpha \colon \Gamma \actson X$ is a standard
  action, we equip the $\Z$-module~$\bz X$ of measurable
  bounded functions~$X \to \Z$ with the induced right
  $\Z\Gamma$-module structure given by
  \begin{align*}
    \bz X \times \Gamma & \to \bz X
    \\
    (f,\gamma) & \mapsto
    \bigl(x \mapsto f(\gamma \cdot x)\bigr).
  \end{align*}
  This $\Z\Gamma$-module is denoted by~$\bz \alpha$.
\end{defi}

If $(\alpha,\mu) \colon \Gamma \actson X$ is a standard probability
action, then there is a canonical isomorphism~$\linfz {\alpha,\mu}
\cong \bz \alpha/\nz{\alpha,\mu}$ of $\Z\Gamma$-modules, where
$\nz{\alpha,\mu} \subset \bz{\alpha}$ is the $\Z\Gamma$-submodule of
functions that are $\mu$-almost everywhere~$0$. Moreover, as bounded
functions to~$\Z$ only take on finitely many different values, $\bz
{\alpha}$ is generated as a $\Z$-module by the set~$\{\chi_A \mid A
\subset X \text{ measurable}\}$.

\begin{defi}[parametrised strict fundamental cycles]
  Let $M$ be an oriented closed connected manifold with fundamental
  group~$\Gamma$ and let $\alpha \colon \Gamma \actson X$ be a
  standard action. We write
  \[ j_M^{\alpha} \colon C_*(M;\Z) \to C_*\bigl(M;\bz \alpha \bigr)
  \] 
  for the chain map induced by the inclusion of~$\Z$ into~$\bz \alpha$
  as constant functions. All cycles in~$C_*(M;\bz \alpha)$ representing
  \[ H_*(j_M^\alpha)\bigl([M]_\Z\bigr)
     \in H_*\bigl(M;\bz \alpha\bigr)
  \]
  are called \emph{$\alpha$-parametrised strict fundamental cycles
  of~$M$}.
\end{defi}

In the same way as for $L^\infty$-coefficients, we can introduce
a notion of chains to be in reduced form in~$C_*(M;\bz \alpha)$. 

\begin{defi}[$\ell^1$-norm for strict chains]
  Let $M$ be an oriented closed connected manifold with fundamental
  group~$\Gamma$ and let $(\alpha,\mu)\colon\Gamma\actson X$ be a standard
  probability action. If $c = \sum_{j=1}^m f_j\otimes \sigma_j \in C_*(M;\bz{\alpha})$
  is in reduced from, then we define
  \[
  \ifsvp{c}{(\alpha,\mu)} := \sum_{j=1}^m\int_X\abs{f_j}\;d\mu\in\R_{\geq 0}.
  \]
\end{defi}

\begin{prop}[integral foliated simplicial volume via strict chains]\label{prop:strict}
  Let $M$ be an oriented closed connected manifold with
  fundamental group~$\Gamma$ and let $(\alpha,\mu) \colon \Gamma
  \actson X$ be a standard probability action. Then
  \[ \ifsvp M {(\alpha,\mu)}
  = \inf
  \bigl\{ \ifsvp c {(\alpha,\mu)}
  \bigm| c \in C_*(M;\bz \alpha)
  \text{\ is a strict fundamental cycle}
  \bigr\}.
  \]
\end{prop}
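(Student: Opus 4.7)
The plan is to use the canonical surjection $q \colon \bz\alpha \to \linfz{\alpha,\mu}$ of right $\Z\Gamma$-modules (with kernel~$\nz{\alpha,\mu}$) as a bridge between strict chains and $L^\infty$-chains. The induced chain map $q_* \colon C_*(M;\bz\alpha) \to C_*(M;\linfz{\alpha,\mu})$ satisfies $q_* \circ j_M^\alpha = i_M^{(\alpha,\mu)}$, since both arise from the inclusion of~$\Z$ as constant functions, and therefore sends strict fundamental cycles of~$M$ to $(\alpha,\mu)$-parametrised fundamental cycles of~$M$. The key observation is the norm identity
\[
\ifsvp{q_*(\tilde c)}{(\alpha,\mu)} = \ifsvp{\tilde c}{(\alpha,\mu)}
\]
for every $\tilde c = \sum_{j=1}^m \tilde f_j \otimes \sigma_j \in C_*(M;\bz\alpha)$ in reduced form: $q_*(\tilde c) = \sum_j q(\tilde f_j) \otimes \sigma_j$ is again in reduced form with the same simplices, and $\int_X|\tilde f_j|\,d\mu = \int_X|q(\tilde f_j)|\,d\mu$ because the integral of $|\cdot|$ on $\bz\alpha$ factors through $\mu$-equivalence classes; the summands with $q(\tilde f_j) = 0$ are $\mu$-a.e.\ zero and contribute nothing on either side.

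The inequality ``$\leq$'' in the proposition then follows immediately: for every strict fundamental cycle $\tilde c$, the chain $q_*(\tilde c)$ is an $(\alpha,\mu)$-parametrised fundamental cycle with the same parametrised norm as~$\tilde c$, so the infimum on the left is bounded above by the infimum on the right.

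For the reverse inequality, the aim is to lift a given $(\alpha,\mu)$-parametrised fundamental cycle $c$ to a strict fundamental cycle $\tilde c$ with $q_*(\tilde c) = c$, which by the norm identity then has parametrised norm equal to~$\ifsvp c {(\alpha,\mu)}$. To achieve this, fix an integral fundamental cycle~$z \in C_n(M;\Z)$; then $j_M^\alpha(z)$ is a strict fundamental cycle with $q_*(j_M^\alpha(z)) = i_M^{(\alpha,\mu)}(z)$. Since both $c$ and $i_M^{(\alpha,\mu)}(z)$ represent~$[M]^{(\alpha,\mu)}$, there is $b \in C_{n+1}(M;\linfz{\alpha,\mu})$ with $c - i_M^{(\alpha,\mu)}(z) = \partial b$. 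Choose a measurable bounded representative of each $\linfz{\alpha,\mu}$-coefficient of~$b$ to obtain a lift $\tilde b \in C_{n+1}(M;\bz\alpha)$ with $q_*(\tilde b) = b$, and set $\tilde c := j_M^\alpha(z) + \partial \tilde b$. Then $\tilde c$ is a strict cycle representing $H_n(j_M^\alpha)([M]_\Z)$ with $q_*(\tilde c) = c$, and after passing to a reduced form of~$\tilde c$ the norm identity yields $\ifsvp{\tilde c}{(\alpha,\mu)} = \ifsvp c {(\alpha,\mu)}$, completing the argument.

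No serious obstacle is anticipated. The only mildly delicate point is the bookkeeping of reduced forms under~$q_*$, but it is harmless because any discrepancy corresponds to coefficients that are $\mu$-almost everywhere zero and thus contribute no norm on either side.
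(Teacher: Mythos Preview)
Your proof is correct and follows essentially the same approach as the paper: both use the canonical surjection $\bz\alpha \to \linfz{\alpha,\mu}$, observe that the induced chain map is norm-preserving and compatible with the inclusions of constants, and for the nontrivial inequality lift a boundary witnessing $c - i_M^{(\alpha,\mu)}(c_\Z) = \partial d$ to a strict chain and add it to the image of an integral fundamental cycle. Your treatment of reduced forms under~$q_*$ is slightly more explicit than the paper's, but the argument is the same.
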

\begin{proof}
  The canonical map~$\varphi \colon \bz \alpha \to \linfz{\alpha,\mu}$
  acts as identity on constant functions. The induced chain
  map
  \[ \Phi_* \colon C_*\bigl(M;\bz\alpha\bigr)
  \to C_*\bigl(M;\linfz{\alpha,\mu}\bigr)
  \]
  satisfies~$i_M^{(\alpha,\mu)} = \Phi_* \circ j_M^\alpha$ and thus
  maps $\alpha$-parametrised strict fundamental cycles (in reduced
  form) to $(\alpha,\mu)$-parametrised fundamental cycles (in reduced
  form). Moreover, $\Phi_*$ is isometric with respect to~$\ifsvp \cdot
  {(\alpha,\mu)}$.  Therefore, the estimate~``$\leq$'' of the claim
  holds.
  
  For the converse estimate, we argue via the approximation of
  boundaries: Let $n := \dim M$ and let $c \in
  C_n(M;\linfz{\alpha,\mu})$ be an $(\alpha,\mu)$-parametrised
  fundamental cycle.  It suffices to find an $\alpha$-parametrised
  strict fundamental cycle having norm at most~$\ifsvp c {(\alpha,\mu)}$. 
  By definition, there exist a fundamental
  cycle~$c_\Z \in C_*(M;\Z)$ of~$M$ and a chain~$d \in
  C_{n+1}(M;\linfz{\alpha,\mu})$ with
  \[ c = c_\Z + \partial d \in C_n\bigl(M ;\linfz{\alpha,\mu}\bigr).
  \]
  With $\varphi$ also $\Phi_*$ is surjective in every degree. Let
  $\widetilde d \in C_{n+1}(M;\bz \alpha)$ be a chain
  with~$\Phi_{n+1}(\widetilde d) = d$.
  For the strict chain 
  \[ \widetilde c:= c_\Z + \partial \widetilde d \in C_n\bigl(M;\bz \alpha\bigr)
  \]
  we obtain the estimate

\begin{align*}
	\ifsvp{\tilde c}{(\alpha,\mu)}	&=\ifsvp{\Phi_n(\tilde c)}{(\alpha,\mu)}						&\text{($\Phi_*$ is isometric)}\\
									&=\ifsvp{c_{\Z}+\Phi_n(\partial\tilde d)}{(\alpha,\mu)}			&\text{($i_M^{(\alpha,\mu)} = \Phi_* \circ j_M^\alpha$)}\\
									&=\ifsvp{c_{\Z}+\partial\Phi_{n+1}(\tilde d)}{(\alpha,\mu)}		&\text{($\Phi_*$ is a chain map)}\\
									&=\ifsvp{c_{\Z}+\partial d}{(\alpha,\mu)}						&\text{($\Phi_{n+1}(\tilde d)=d$)}\\
									&=\ifsvp{c}{(\alpha,\mu)} &
\end{align*}
  By construction, $\widetilde c$ is an $\alpha$-parametrised
  strict fundamental cycle of~$M$.
  This completes the proof of the estimate~``$\geq$'' of the claim.
\end{proof}

\begin{rem}[decomposition into invariant subspaces]
  \label{rem:decompinv}
  Let $M$ be an oriented closed connected $n$-manifold, let
  $\Gamma := \pi_1(M)$, and let $\alpha \colon \Gamma \actson X$ be a
  standard action. Let $A \subset X$ be a measurable subset
  with~$\Gamma \cdot A = A$ and let $\overline A := X \setminus A$; then the
  restrictions~$\alpha|_A \colon \Gamma \actson A$ and $\alpha|_{\overline A}
  \colon \Gamma \actson \overline A$ are standard actions. If $c, \overline c
  \in C_*(M; \bz \alpha)$ are fundamental cycles, then also
  \[ \chi_A \cdot c + \chi_{\overline A} \cdot \overline c
  \in C_*\bigl(M;\bz \alpha\bigr)
  \]
  is a fundamental cycle. Here, $\chi_A \cdot c$ and
  $\chi_{\overline A} \cdot \overline{c}$ come from the $\bz \alpha^\Gamma$-$\Z \Gamma$-bimodule
  structure on~$\bz \alpha$; more explicitly, if $c=\sum_{i=1}^kf_i\otimes\sigma_i$ is a
  strict chain, then $\chi_A\cdot c$ denotes the strict chain
  $\sum_{i=1}^k(\chi_A \cdot f_i)\otimes\sigma_i=\sum_{i=1}^kf_i|_A\otimes\sigma_i;
  $
  this is well-defined because $A$ satisfies~$\Gamma \cdot A = A$. 
  Indeed the mutually inverse isomorphisms 
  \begin{align*}
    \bz \alpha &\leftrightarrow\bz {\alpha|_A} \oplus\bz {\alpha|_{\overline A}}\\
    f&\mapsto(f|_A,f|_{\overline A})\\
    g|_A+h|_{\overline A}&\mapsfrom(g,h)
  \end{align*}
  of $\Z\Gamma$-modules induce an isomorphism
  \[
  C_*\bigl(M;\bz \alpha\bigr)
  \cong C_*\bigl(M;\bz {\alpha|_A}\bigr)
  \oplus C_*\bigl(M;\bz {\alpha|_{\overline A}}\bigr)
  \]
  of $\Z$-chain complexes that is compatible with the inclusions
  of~$C_*(M;\Z)$ as constant chains. Hence, the $\alpha$-parametrised
  strict fundamental cycles of~$M$ correspond to pairs of $\alpha|_A$-
  and $\alpha|_{\overline A}$-parametrised strict fundamental cycles
  of~$M$. In particular, $\chi_A \cdot c + \chi_{\overline A} \cdot \overline c$
  is an $\alpha$-parametrised strict fundamental cycle of~$M$.
\end{rem}  

\section{Ergodic decomposition}\label{sec:ergdec}

We quickly recall a version of the ergodic decomposition theorem,
introduce notation, and collect some basic properties that will
be used in Section~\ref{sec:proof}.

\begin{defi}[ergodicity]
  A standard probability action $(\alpha,\mu)\colon\Gamma\actson X$ is
  \emph{ergodic} if for every measurable subset~$A\subseteq X$
  with~$\Gamma\cdot A=A$, we have 
  \[
  \mu(A)=0 \qor \mu(A)=1.
  \]
\end{defi}

\begin{defi}[spaces of measures]
  Let $\alpha\colon\Gamma\actson X$ be a standard action;
  \begin{enumerate}
  \item We denote the set of probability measures on~$X$
    by~$\Prob(X)$;
  \item
    We write~$\Prob(\alpha) \subset \Prob(X)$ for the subset of all
    probability measures~$\mu$ on~$X$ for which the action~$\alpha$ is
    $\mu$-preserving.
  \item 
    We write~$\Erg(\alpha) \subset \Prob(\alpha)$ for the
    subset of all $\alpha$-invariant probability measures
    that are ergodic.
  \end{enumerate}
\end{defi}

\begin{defi}[ergodic decomposition]\label{def:ergdec}
  An \emph{ergodic decomposition} of a standard probability
  action~$(\alpha,\mu)\colon\Gamma\actson X$ is a map~$\beta\colon
  X\to\Erg(\alpha)$ (and we will write~$\beta_x$ for~$\beta(x)$) with
  the following properties:
  \begin{enumerate}
  \item
    For every measurable subset~$A \subset X$ the evaluation map
    \begin{align*}
      X&\to[0,1]\\
      x&\mapsto\beta_x(A)
    \end{align*}
    is measurable;
  \item For every measurable subset~$A\subset X$, we have
    $\mu(A)=\int_X\beta_x(A)\;d\mu(x)$;
  \item For all~$x\in X$ and for all~$\gamma\in\Gamma$, we have
    $\beta_{\gamma\cdot x}=\beta_x$;
  \item For every $\nu\in\Erg(\alpha)$, the preimage
    $X_{\nu}:=\beta^{-1}(\nu)$ is measurable and~$\nu(X_{\nu})=1$.
  \end{enumerate} 
\end{defi}

\begin{prop}[integrals and ergodic decomposition]\label{prop:integralsandergdec}
  Let $(\alpha,\mu)\colon\Gamma\actson X$ be a standard probability action 
  and let $\beta\colon X\to\Erg(\alpha)$ be an ergodic decomposition
  of~$(\alpha,\mu)$. For every~$f\in\bz\alpha$, the function
  \begin{align*}
    X&\to\R\\
    x&\mapsto\int_Xf\;d\beta_x
  \end{align*}
  is measurable and
  \[
  \int_Xf \;d\mu=\int_X\biggl(\int_X f\;d\beta_x\biggr)\;d\mu(x).
  \]
\end{prop}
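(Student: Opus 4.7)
The plan is to reduce the statement to the characteristic-function case, which is built into the definition of ergodic decomposition, and then extend by linearity using the fact that $\bz\alpha$ consists of functions taking only finitely many integer values.

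First I would observe that the statement is tautological when $f = \chi_A$ for some measurable~$A \subset X$: measurability of $x \mapsto \int_X \chi_A \, d\beta_x = \beta_x(A)$ is precisely property~(1) of Definition~\ref{def:ergdec}, and the integral identity is precisely property~(2).

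Next I would exploit the special nature of $\bz\alpha$. Since any $f \in \bz\alpha$ is bounded and integer-valued, its image is a finite subset $\{n_1, \dots, n_k\} \subset \Z$, and the level sets $A_i := f^{-1}(\{n_i\})$ are measurable (since $f$ is). Thus
\[
f = \sum_{i=1}^k n_i \cdot \chi_{A_i}
\]
is a \emph{finite} $\Z$-linear combination of characteristic functions of measurable sets. Applying the characteristic-function case to each~$A_i$ and using linearity of the integral with respect to~$\beta_x$, the function $x \mapsto \int_X f \, d\beta_x = \sum_{i=1}^k n_i \cdot \beta_x(A_i)$ is a finite $\R$-linear combination of measurable functions, hence measurable. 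The integral identity likewise follows by combining the identities $\mu(A_i) = \int_X \beta_x(A_i) \, d\mu(x)$ for $i = 1, \dots, k$ with linearity of $\int_X \,d\mu$.

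There is no serious obstacle here: the only subtlety is noting that $\bz\alpha$-functions are automatically finite integer-linear combinations of indicators, so no monotone-convergence or approximation argument on measures~$\beta_x$ is required, and one avoids any issue of interchanging limits with the integration against~$\mu$.
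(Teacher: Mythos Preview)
Your proof is correct and is essentially the same as the paper's: the paper also observes that members of~$\bz\alpha$ are finite $\Z$-linear combinations of characteristic functions of measurable subsets, so linearity of integration reduces the claim directly to the defining properties of an ergodic decomposition.
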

\begin{proof}
  Members of~$\bz\alpha$ are $\Z$-linear combinations of characteristic
  functions of measurable subsets of~$X$. Therefore, linearity of
  integration reduces the claim to the definition of ergodic
  decomposition.
\end{proof}

A standard probability action always admits an ergodic
decomposition. We can even get a stronger existence result: If a standard
action~$\alpha\colon\Gamma\actson X$ admits at least one invariant
probability measure (i.e., $\Prob(\alpha)\ne\varnothing$), then there
exists a \emph{universal ergodic decomposition}, namely a
function~$\beta\colon X\to\Erg(\alpha)$ that is an ergodic decomposition for
every standard probability action~$(\alpha,\mu)\colon\Gamma\actson X$.

\begin{thm}[ergodic decomposition theorem~\protect{\cite[Section~4]{varadarajan}}]
  \label{thm:ergdec}
  Let $\alpha\colon\Gamma\actson X$ be a standard action and let us
  assume that the standard Borel space~$X$ admits at least one
  $\Gamma$-invariant probability measure. Then there exists a
  map~$\beta\colon X\to\Erg(\alpha)$ that for
  every~$\mu\in\Prob(\alpha)$ is an ergodic decomposition
  of~$(\alpha,\mu)\colon\Gamma\actson X$.
\end{thm}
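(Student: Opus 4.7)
The plan is to construct $\beta$ as a Borel-measurable map arising from a pointwise disintegration of $X$ with respect to the $\sigma$-algebra of $\Gamma$-invariant measurable subsets, exploiting the standard Borel structure of~$X$ to produce a single map that works simultaneously for every invariant probability measure.

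First, I would realise $X$ as a Polish space carrying its Borel $\sigma$-algebra and endow $\Prob(X)$ with the weak-$*$ topology, under which $\Prob(X)$ is itself Polish. Then $\Prob(\alpha)$ is the intersection over $\gamma\in\Gamma$ of the closed conditions $\gamma_*\mu=\mu$ and is thus a Polish subspace, while $\Erg(\alpha)$, being the set of extreme points of $\Prob(\alpha)$, is a $G_\delta$ (hence Borel) subset. Next, fix a countable family $\{f_n\}_{n\in\N}$ of bounded Borel functions dense in $C_b(X)$. For each $n$, I would construct a $\Gamma$-invariant Borel function $\phi_n\colon X\to\R$ playing the role of a pointwise conditional expectation of $f_n$ with respect to the invariant $\sigma$-algebra: for amenable $\Gamma$ one can take F\o lner averages of $f_n$ along the orbit of~$x$, while for general countable groups one invokes an analogous universally measurable construction from the theory of invariant means.

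With the $\phi_n$ in hand, declare $\beta_x$ to be the unique element of $\Prob(X)$ satisfying $\int_X f_n\,d\beta_x=\phi_n(x)$ for every~$n$, whenever this prescription is consistent. On a $\Gamma$-invariant Borel subset $X_0\subseteq X$ that is conull for every $\mu\in\Prob(\alpha)$, this yields a Borel map $\beta\colon X_0\to\Erg(\alpha)$ satisfying the disintegration identity $\mu=\int_X\beta_x\,d\mu(x)$; on the complement, which is $\mu$-null for every such~$\mu$, extend $\beta$ arbitrarily, e.g.\ by a fixed chosen element of $\Erg(\alpha)$.

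The main obstacle is the universality of $X_0$: the pointwise ergodic theorem a priori produces, for each individual $\mu$, a $\mu$-conull convergence set that depends on~$\mu$. The standard resolution is to fix the generating family $\{f_n\}$ and the averaging data once and for all from the Borel structure of~$X$, so that the set of points at which \emph{all}\/ the chosen limits exist and determine an ergodic probability measure is a \emph{single}\/ Borel $\Gamma$-invariant subset automatically conull for every $\mu\in\Prob(\alpha)$. Once this is secured, the four conditions of Definition~\ref{def:ergdec} follow: (1) is the Borelness of $\beta$ composed with evaluation, which is Borel on $\Prob(X)$; (2) is the disintegration identity; (3) holds by $\Gamma$-invariance of the $\phi_n$; and (4) is forced by mutual singularity of distinct ergodic measures, which compels each fibre $X_\nu=\beta^{-1}(\nu)$ to carry full $\nu$-mass.
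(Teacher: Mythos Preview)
The paper does not prove this theorem; it is quoted from Varadarajan~\cite[Section~4]{varadarajan} as a black box, so there is no proof in the paper to compare against.

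As for your sketch on its own merits: the overall architecture (Polish structure on~$\Prob(X)$, extremality of~$\Erg(\alpha)$, disintegration via a countable separating family, a single Borel exceptional set) is sound, but the central step is not. You propose to build the invariant functions~$\phi_n$ by F{\o}lner averaging for amenable~$\Gamma$ and then assert that ``for general countable groups one invokes an analogous universally measurable construction from the theory of invariant means.'' No such construction exists: a non-amenable group has no invariant mean, and even where means or Banach limits are available they do not produce Borel pointwise limits, let alone ones that are simultaneously defined off a single Borel null set for every invariant measure. The pointwise ergodic theorem route genuinely breaks down beyond the amenable case, and this is exactly the place where the universality of~$X_0$ that you flag as the main obstacle fails to be resolved by your proposal.

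Varadarajan's argument avoids ergodic averages entirely. He works with the Choquet-type structure of the compact convex set~$\Prob(\alpha)$: one shows that~$\Prob(\alpha)$ is a simplex whose extreme points are~$\Erg(\alpha)$, and then constructs~$\beta$ as a Borel barycentre map using a measurable selection from the unique boundary representation. The universality over all~$\mu\in\Prob(\alpha)$ comes for free from the uniqueness of the Choquet decomposition, not from any pointwise convergence statement. If you want to repair your approach, replace the ergodic-average construction of~$\phi_n$ by conditional expectations with respect to the $\sigma$-algebra of $\Gamma$-invariant Borel sets, realised via a martingale along a fixed countable generating algebra; martingale convergence is measure-by-measure, but the limit is determined pointwise by the chosen filtration, which is what yields a universal~$X_0$.
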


\begin{rem}[\protect{\cite[Lemma~4.1]{varadarajan}}]\label{rem:compatiblemeasure}
  Let $\alpha\colon\Gamma \actson X$ be a measurable action of a countable group
  on a standard Borel space, let $\mu \in \Prob(\alpha)$, and let
  $\beta \colon X \longrightarrow \Erg(\alpha)$ be an ergodic
  decomposition of~$\Gamma \actson (X,\mu)$.

  Let $A \subset X$ be a measurable subset that is 
  $\beta$-compatible in the sense that 
  \[ \fa{x,y \in X} \beta_x = \beta_y
  \Longrightarrow (x\in A \Longleftrightarrow y \in A).
  \]
  Then, we have
  \[ \fa{x \in X} x \in A \Longleftrightarrow \beta_x(A) = 1.
  \]
  
  Indeed, let $x \in A$. From $\beta$-compatibility, we
  obtain that~$X_{\beta_x} \subset A$, and so~$\beta_x(A) \geq
  \beta_x(X_{\beta_x}) =1$. Thus, $\beta_x(A) = 1$.  If $x \in X
  \setminus A$, then we can apply the same argument (with~$A$
  also~$X\setminus A$ is $\beta$-compatible) to show that
  $\beta_x(X\setminus A) =1$, whence~$\beta_x(A) = 0$.
\end{rem}

\begin{rem}\label{rem:essfree}
  Let $\alpha\colon\Gamma \actson X$ be a standard action, let $\mu
  \in \Prob(\alpha)$, and let $\beta \colon X \longrightarrow
  \Erg(\alpha)$ be an ergodic decomposition of~$(\alpha,\mu) \colon
  \Gamma \actson X$.  If the given $\Gamma$-action~$\alpha$ is
  essentially free with respect to~$\mu$, then for $\mu$-almost
  every~$x\in X$, it is also essentially free with respect
  to~$\beta_x$: Indeed, if $\alpha\colon\Gamma\actson X$ is
  essentially free, then $A := \{x\in X\mid\Gamma_x\ne1\}$ is
  measurable and $\mu(A)=0$. The identity
  \[
  0=\mu(A)=\int_X\beta_x(A)\;d\mu(x)
  \]
  yields that $\beta_x(A)=0$ for~$\mu$-almost every $x\in X$.
\end{rem}

\begin{prop}\label{prop:chainergdec}
  Let $M$ be an oriented closed connected manifold with fundamental
  group $\Gamma$. Let $(\alpha,\mu)\colon\Gamma\actson X$ be a
  standard probability action and let $\beta\colon X\to\Erg(\alpha)$ be
  an ergodic decomposition of~$(\alpha,\mu)$. Then, for every strict
  chain~$c\in C_*(M;\bz{\alpha})$, the map
  \begin{align*}
    F_c\colon X&\to\R_{\geq 0}\\
    x&\mapsto\ifsvp{c}{(\alpha,\beta_x)}
  \end{align*}
  is measurable and 
  \[ 
  \ifsvp c {(\alpha,\mu)} = \int_X \ifsvp c {(\alpha,\beta_x)} \;d\mu(x).
  \]
\end{prop}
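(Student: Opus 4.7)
The plan is to reduce both assertions to Proposition~\ref{prop:integralsandergdec} applied term by term in a reduced form of~$c$. To this end, I would first write~$c$ in reduced form as
\[
c = \sum_{j=1}^m f_j \otimes \sigma_j \in C_*(M; \bz \alpha),
\]
with $\pi\circ\sigma_j\neq\pi\circ\sigma_k$ for $j\ne k$. This decomposition, and the associated coefficients~$f_j$, depend only on~$c$ (essentially uniquely, up to the $\Gamma$-action on the singular simplices of~$\ucov M$) and not on the probability measure used to compute the norm. Consequently, for every $\Gamma$-invariant probability measure~$\nu$ on~$X$, the definition of the $(\alpha,\nu)$-parametrised $\ell^1$-norm directly yields
\[
\ifsvp{c}{(\alpha,\nu)} = \sum_{j=1}^m \int_X \abs{f_j} \, d\nu.
\]
In particular, $F_c(x) = \sum_{j=1}^m \int_X \abs{f_j}\,d\beta_x$.

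For the measurability claim, I would observe that each~$\abs{f_j}$ lies in~$\bz\alpha$ since $f_j$~is bounded and $\Z$-valued. Hence Proposition~\ref{prop:integralsandergdec}, applied to each~$\abs{f_j}$, shows that the map $x \mapsto \int_X \abs{f_j}\,d\beta_x$ is measurable. Being a finite sum of such maps, $F_c$~is measurable as well.

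For the integration formula, I would apply Proposition~\ref{prop:integralsandergdec} to~$\abs{f_j}$ once more, now for each~$j$, and sum:
\begin{align*}
\int_X F_c(x)\,d\mu(x)
& = \sum_{j=1}^m \int_X \biggl(\int_X \abs{f_j}\,d\beta_x\biggr)\,d\mu(x) \\
& = \sum_{j=1}^m \int_X \abs{f_j}\,d\mu
= \ifsvp{c}{(\alpha,\mu)}.
\end{align*}
There is no serious obstacle here: the proof is essentially a bookkeeping argument, with the only small point to verify being that working in reduced form is legitimate uniformly in the measure~$\nu$ (which it is, since reducedness is a property of the projected simplices in~$M$ and the $\Gamma$-invariance of $\mu$ and of each~$\beta_x$ makes the integrals $\int_X\abs{f_j}\,d\nu$ independent of the choice of $\Gamma$-orbit representative of~$\sigma_j$).
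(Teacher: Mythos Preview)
Your proof is correct and follows essentially the same approach as the paper: write~$c$ in reduced form, note that this form is independent of the invariant measure, and apply Proposition~\ref{prop:integralsandergdec} to each~$\abs{f_j}$ to obtain both measurability of~$F_c$ and the integration formula. Your explicit remark on why reducedness is uniform across all~$\nu\in\Prob(\alpha)$ is a helpful clarification that the paper leaves implicit.
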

\begin{proof}
  Measurability of the function~$x\mapsto\ifsvp{c}{(\alpha,\beta_x)}$
  follows from Proposition~\ref{prop:integralsandergdec}. Let
  $c=\sum_{j=1}^m f_j\otimes \sigma_j\in C_*(M;\bz{\alpha})$ be
  in reduced form. Then
  \begin{align*}
    \ifsvp{c}{(\alpha,\mu)}&=\sum_{j=1}^m\int_X\abs{f_j}\;d\mu
    &\text{(definition)}\\
    &=\sum_{j=1}^m\int_X\biggl(\int_X\abs{f_j}\;d\beta_x\biggr)\;d\mu(x)
    & \text{(Proposition~\ref{prop:integralsandergdec})}\\
    &=\int_X\biggl(\sum_{j=1}^m\int_X\abs{f_j}\;d\beta_x\biggr)\;d\mu(x)
    &\text{(linearity)}\\
    &=\int_X\ifsvp{c}{(\alpha,\beta_x)}\;d\mu(x),
  \end{align*}
  as claimed.
\end{proof}

\section{Proof of Theorem~\ref{thm:ifsvergdec}}\label{sec:proof}


Regarding the proof of Theorem~\ref{thm:ifsvergdec}, we have shown 
that the parametrised simplicial volume can be computed via strict
fundamental cycles.  The strict chain
complex~$\bz{\alpha}\otimes_{\Z\Gamma}C_*(\ucov{M};\Z)$ is, in general, not
countable; our goal is now to reduce to a countable
subcomplex~$\Sigma_*(M,\alpha;\Z)$ that works uniformly for all probability
measures on the given standard action.  We proceed in two steps: We
reduce the singular simplices and the measurable function spaces
separately and then combine both reductions through the tensor
product.

\subsection{Reduction: countably many simplices}

We first show that we need only countably many singular simplices:

\begin{prop}\label{prop:simplices}
  Let $M$ be a compact connected manifold with fundamental
  group~$\Gamma$. Then there exists a countable
  $\Z\Gamma$-subcomplex~$C_*$ of~$C_*(\ucov M;\Z)$ such that the
  inclusion~$C_* \hookrightarrow C_*(\ucov M;\Z)$ is a
  $\Z\Gamma$-chain homotopy equivalence and such that there exists a
  chain homotopy inverse of norm at most~$1$.
\end{prop}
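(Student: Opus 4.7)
The plan is to fix a finite simplicial/CW structure on~$M$, lift it $\Gamma$-equivariantly to~$\ucov M$, take $C_*$ to be the $\Z\Gamma$-subcomplex generated by the affine singular simplices associated to the triangulation, and construct a norm-decreasing chain homotopy inverse via an equivariant selection operator. First, I would fix a finite triangulation of~$M$ (available for smooth or PL manifolds; otherwise reduce via CW-approximation and subdivision) and let $\ucov M$ inherit a $\Gamma$-equivariant triangulation with finitely many $\Gamma$-orbits of simplices in each dimension. A $\Gamma$-equivariant total ordering on vertices turns each simplex~$\tau$ into a distinguished singular simplex~$\sigma_\tau \colon \Delta^n \to \ucov M$ via affine parametrisation, and
\[
C_* := \spanz\bigl\{\gamma \cdot \sigma_\tau \bigm| \gamma \in \Gamma,\ \tau \text{ a simplex of } \ucov M\bigr\} \subset C_*(\ucov M;\Z)
\]
is a $\Z\Gamma$-subcomplex, finitely generated over~$\Z\Gamma$ in each degree and hence countable; the inclusion $\iota \colon C_* \hookrightarrow C_*(\ucov M;\Z)$ has operator norm~$1$ and is a homology isomorphism since both sides compute~$H_*(\ucov M;\Z)$.

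The central step is to construct a $\Z\Gamma$-chain homotopy inverse~$r \colon C_*(\ucov M;\Z) \to C_*$ with $\|r\| \leq 1$. Because coefficients are integers, this norm bound forces~$r$ to send each singular simplex to a single signed basis element of~$C_*$ or to zero. I would build such an~$r$ as a $\Gamma$-equivariant, face-compatible \emph{canonical selection}: for each singular simplex~$\sigma$ choose a simplicial simplex $r(\sigma) \in C_* \cup \{0\}$ (with sign) in such a way that $r$ commutes with~$\partial$. Concretely, one chooses a $\Gamma$-equivariant strong deformation retract of~$\ucov M$ adapted to the triangulation whose associated chain-level rounding lands in~$C_*$ without subdivision; the resulting chain map then has operator norm~$1$ by construction. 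The chain homotopy between $\iota \circ r$ and~$\id_{C_*(\ucov M;\Z)}$ is obtained from the deformation retract, while $r \circ \iota = \id_{C_*}$ holds on the nose.

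The hardest part will be producing~$r$ with norm~$1$: the classical simplicial approximation theorem proceeds via iterated barycentric subdivision, which strictly inflates the $\ell^1$-norm, and so cannot be used directly. The key point is to obtain a single-simplex-valued, $\Gamma$-equivariant, face-compatible assignment while bypassing subdivision altogether; this forces the construction to be genuinely geometric rather than via subdivision, realised through a retraction whose composition with any singular simplex already lies in~$C_*$. Compactness of~$M$ ensures that there are only finitely many $\Gamma$-orbits to control, making the construction tractable orbit-by-orbit, and the equivariant cellular structure of~$\ucov M$ controls the extension over dimensions via induction on the skeleta.
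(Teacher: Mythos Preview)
Your proposal has a genuine gap in the construction of the chain homotopy inverse~$r$. You take~$C_*$ to be spanned only by the affine singular simplices coming directly from a fixed triangulation of~$\ucov M$; but then no norm-$1$ chain map~$r \colon C_*(\ucov M;\Z) \to C_*$ homotopy-inverse to the inclusion can exist. The obstruction is already visible in degree~$1$: if $\sigma \colon \Delta^1 \to \ucov M$ is a path whose endpoints land in vertices of the triangulation that are not joined by an edge, then for~$r$ to be a chain map with~$\|r\|\le 1$ we would need~$\partial r(\sigma) = r(\partial\sigma)$, where the right-hand side is a difference of two non-adjacent vertices while the left-hand side is the boundary of a single signed edge of the triangulation (or zero). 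No choice of~$r(\sigma)$ resolves this. Your proposed ``deformation retract whose composition with any singular simplex already lies in~$C_*$'' is not an actual construction: composing an arbitrary singular simplex with a self-map of~$\ucov M$ yields another singular simplex, not a simplicial one, and there is no retraction of~$\ucov M$ that forces the composite to be simplicial without subdividing the domain. You correctly flag this as the hardest part, but the workaround you sketch cannot be made to work.

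The paper's remedy is to enlarge~$C_*$ rather than to avoid subdivision. It takes~$S$ to be the set of all singular simplices of~$\ucov M$ that arise as geometric realisations of simplicial maps defined on \emph{iterated barycentric subdivisions} of the standard simplices, and lets~$C_*$ be the span of~$S$. This set is still countable and closed under faces and the $\Gamma$-action. Inductive relative simplicial approximation then furnishes, for each singular simplex~$\sigma$, a homotopy~$h_\sigma$ to a single element of~$S$, compatibly with faces and the $\Gamma$-action, and one sets~$\Phi_*(\sigma) := h_\sigma(\,\cdot\,,1)$. The point you missed is that although simplicial approximation subdivides the \emph{domain}, the approximating map is still one continuous map~$\Delta^n \to \ucov M$, hence one singular simplex; so~$\|\Phi_*\| \le 1$. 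Subdivision does not inflate the $\ell^1$-norm here because one never applies a subdivision chain operator to~$C_*(\ucov M;\Z)$; the subdivided domain only serves to parametrise a single target simplex.
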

\begin{proof}
  We proceed by the standard inductive simplices selection procedure,
  similar to the non-equivariant case of smooth
  simplices~\cite[Lemma~18.9]{leesmooth}: As a compact manifold, $M$
  is homotopy equivalent to a countable (even finite) simplicial
  complex~$M'$~\cite{milnor,siebenmann,kirbysiebenmann}.
  The chain complex~$C_*(\ucov M;\Z)$ is thus $\Z\Gamma$-chain
  homotopy equivalent to~$C_*(\ucov M';\Z)$, where the chain
  homotopies in both directions may be chosen of norm at most~$1$.
  Therefore, we may and will assume that $M$ itself is a countable
  simplicial complex.

  Let $S$ be the set of all singular simplices of~$\ucov M$
  that are (geometric realisations of) simplicial maps to~$\ucov M$,
  defined on iterated barycentric subdivisions of the standard simplices.
  Using inductive (relative) simplicial approximation we can thus
  find a 
  family~$(h_\sigma\colon \Delta^{\dim \sigma} \times [0,1] \to \ucov
  M)_{\sigma \in \map(\Delta^*,\ucov M)}$ of continuous maps with the
  following properties:
  \begin{itemize}
  \item The set~$S$ is closed under taking faces and under the
    deck transformation action.
  \item For all~$\sigma \in \map(\Delta^*, \ucov M)$, we have~$h_{\sigma}(\args,1) \in S$.
  \item If $\sigma \in S$, then $h_\sigma$ is the constant homotopy
    from~$\sigma$ to itself.
  \item For all~$\sigma \in \map(\Delta^*,\ucov M)$ and all~$\gamma \in \Gamma$,
    we have~$h_{\gamma \cdot \sigma} = \gamma \cdot h_\sigma$.
  \item For all~$n \in \N$, all~$j \in \{0,\dots, n\}$, and
    every singular $n$-simplex~$\sigma \in \map(\Delta^n,\ucov M)$, we have
    \[ h_\sigma \circ (\iota_j \times \id_{[0,1]})
    = h_{\sigma \circ \iota_j}
    \]
    where $\iota_j \colon \Delta^{n-1} \to \Delta^n$ denotes the
    inclusion of the $j$-th face.
  \end{itemize}
  We define~$C_*$ to be the subcomplex of~$C_*(\ucov M;\Z)$ spanned
  by~$S$, which is countable. The inclusion~$i_* \colon C_*
  \hookrightarrow C_*(\ucov M;\Z)$ is a $\Z\Gamma$-chain map.
  Conversely, we define~$\Phi_* \colon C_*(\ucov M;\Z) \to C_*$ as the
  $\Z\Gamma$-linear extension of
  \[ \fa{\sigma \colon \Delta^*\to\ucov M}
     \Phi_*(\sigma) := h_\sigma(\args,1);
  \]
  this indeed is a chain map.    
  Moreover, $\Phi_* \circ i_* = \id_{C_*}$ and the standard prism
  decomposition of~$\Delta^*\times[0,1]$ shows that
  $i_* \circ \Phi_* \simeq_{\Z\Gamma} \id_{C_*(\ucov M;\Z)}$.
  By construction, $\|\Phi_*\| \leq 1$.
\end{proof}

\begin{cor}\label{cor:integralcycles}
  Let $M$ be an oriented closed connected manifold with
  fundamental group~$\Gamma$ and let $C_*$ be
  as provided by Proposition~\ref{prop:simplices}.
  Then $\Z \otimes_{\Z\Gamma} C_*$ contains an integral
  fundamental cycle of~$M$.
\end{cor}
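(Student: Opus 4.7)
The plan is to obtain the desired integral fundamental cycle by pushing an arbitrary one through the chain homotopy inverse~$\Phi_*$ supplied by Proposition~\ref{prop:simplices}. First I would tensor the $\Z\Gamma$-chain maps $i_* \colon C_* \hookrightarrow C_*(\ucov M;\Z)$ and $\Phi_* \colon C_*(\ucov M;\Z) \to C_*$ with the trivial right $\Z\Gamma$-module~$\Z$, obtaining $\Z$-chain maps
\[
\overline i_* := \id_\Z \otimes_{\Z\Gamma} i_*
\qand
\overline \Phi_* := \id_\Z \otimes_{\Z\Gamma} \Phi_*
\]
between $\Z \otimes_{\Z\Gamma} C_*$ and $C_*(M;\Z) = \Z \otimes_{\Z\Gamma} C_*(\ucov M;\Z)$. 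From $\Phi_* \circ i_* = \id_{C_*}$ I get $\overline \Phi_* \circ \overline i_* = \id$, so $\overline i_*$ is injective and identifies $\Z \otimes_{\Z\Gamma} C_*$ with a chain subcomplex of~$C_*(M;\Z)$. Applying $\Z \otimes_{\Z\Gamma} \args$ to the $\Z\Gamma$-chain homotopy $i_* \circ \Phi_* \simeq_{\Z\Gamma} \id$ then yields a $\Z$-chain homotopy $\overline i_* \circ \overline \Phi_* \simeq \id_{C_*(M;\Z)}$.

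With these preparations, I would pick any integral fundamental cycle $c_\Z \in C_n(M;\Z)$, where $n = \dim M$, and set
\[
c := \overline \Phi_n(c_\Z) \in \Z \otimes_{\Z\Gamma} C_n.
\]
Since $\overline \Phi_*$ is a chain map and $c_\Z$ is a cycle, $c$ is a cycle. Via the subcomplex identification above, $\overline i_n(c) = \overline i_n\bigl(\overline \Phi_n(c_\Z)\bigr)$ differs from $c_\Z$ by a boundary in $C_*(M;\Z)$, so both represent the same class~$[M]_\Z$ in~$H_n(M;\Z)$. Hence $c$ is an integral fundamental cycle of~$M$ lying inside~$\Z \otimes_{\Z\Gamma} C_*$, as required.

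I don't expect any real obstacle: the argument is purely formal once Proposition~\ref{prop:simplices} is in hand. The only subtle point to double-check is that the chain homotopy between $i_* \circ \Phi_*$ and the identity is genuinely $\Z\Gamma$-linear (so that it descends under $\Z \otimes_{\Z\Gamma} \args$); this is guaranteed by the construction of $\Phi_*$ through equivariant simplicial approximation in the proof of Proposition~\ref{prop:simplices}.
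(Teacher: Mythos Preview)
Your proposal is correct and follows essentially the same approach as the paper: take an integral fundamental cycle in~$C_*(M;\Z)=\Z\otimes_{\Z\Gamma}C_*(\ucov M;\Z)$ and push it into~$\Z\otimes_{\Z\Gamma}C_*$ via~$\id_\Z\otimes_{\Z\Gamma}\Phi_*$. The paper states this in one line, while you have spelled out the supporting details about why the image is again a fundamental cycle.
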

\begin{proof}
  Let $c \in \Z \otimes_{\Z\Gamma} C_*(\ucov M;\Z)$ be an integral
  fundamental cycle of~$M$ and let 
  $\Phi_*$ be a chain homotopy inverse as provided
  by Proposition~\ref{prop:simplices}. 
  Then 
  $(\id_\Z \otimes_{\Z\Gamma} \Phi_*) (c)$ 
  is an integral fundamental cycle of~$M$ that
  lies in the subcomplex~$\Z \otimes_{\Z\Gamma} C_*$.
\end{proof}

\subsection{Reduction: countably many functions}

Standard Borel spaces have the following uniform
regularity property for probability measures:

\begin{prop}\label{prop:regular}
  Let $X$ be a standard Borel space. Then there exists
  a countable subalgebra~$\Sigma$ of the Borel $\sigma$-algebra 
  of~$X$ that is dense with respect to \emph{every} probability
  measure on~$X$. I.e.: For every probability measure~$\mu$,
  for every measurable subset~$A$ and every~$\varepsilon \in \R_{>0}$,
  there exists an~$A' \in \Sigma$ with
  \[ \mu (A \symmdiff A') < \varepsilon.
  \]
\end{prop}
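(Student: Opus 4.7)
The plan is to produce a single countable algebra of Borel sets, defined purely from the Borel structure of~$X$, that simultaneously generates the Borel $\sigma$-algebra and approximates every probability measure on~$X$ via the classical Halmos-style approximation theorem.

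Since $X$ is standard Borel, it is Borel isomorphic to a Polish space; transporting the Borel structure, we may assume that $X$ carries a compatible Polish topology, which admits a countable topological basis~$\mathcal B$. Let $\Sigma$ be the Boolean algebra of subsets of~$X$ generated by~$\mathcal B$. Being the Boolean algebra generated by a countable family, $\Sigma$ is itself countable; and because $\mathcal B$ is a basis of the topology, $\sigma(\Sigma)$ coincides with the Borel $\sigma$-algebra of~$X$. Crucially, this construction uses only the Borel structure of~$X$ and is independent of any choice of probability measure.

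Now fix an arbitrary probability measure~$\mu$ on~$X$ and consider the class
\[
\mathcal D_\mu := \bigl\{ A \subset X \text{ Borel} \bigm| \text{for every } \varepsilon > 0 \text{ there exists } A' \in \Sigma \text{ with } \mu(A \symmdiff A') < \varepsilon \bigr\}.
\]
The routine inclusion $(A_1 \cup A_2) \symmdiff (A_1' \cup A_2') \subset (A_1 \symmdiff A_1') \cup (A_2 \symmdiff A_2')$, together with analogous estimates for complements and intersections, shows that $\mathcal D_\mu$ is closed under the finite Boolean operations; the continuity of~$\mu$ along monotone sequences then shows that $\mathcal D_\mu$ is a monotone class. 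Since $\Sigma \subset \mathcal D_\mu$, the monotone class theorem yields $\sigma(\Sigma) \subset \mathcal D_\mu$; that is, every Borel subset of~$X$ is $\mu$-approximable by elements of~$\Sigma$. As $\Sigma$ does not depend on~$\mu$, this proves the proposition.

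The main obstacle is conceptual rather than technical: the countable algebra~$\Sigma$ must be chosen purely in terms of the Borel structure of~$X$ so that it serves every probability measure simultaneously. Once such a $\Sigma$ is identified (via the Polish topology and its countable basis), the density for a single fixed measure reduces to a standard monotone class argument.
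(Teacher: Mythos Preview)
Your proof is correct and produces the same countable algebra~$\Sigma$ as the paper (the Boolean algebra generated by a countable basis of a compatible Polish topology), but the approximation step follows a different route. The paper invokes outer regularity of Borel probability measures on Polish spaces~\cite[Theorem~17.10]{kechris2012classical} to reduce to open sets, and then approximates an open set by a finite union of basic open balls via continuity from below. You instead run a monotone class argument directly on the family~$\mathcal D_\mu$ of $\Sigma$-approximable sets. Your approach is slightly more elementary in that it avoids the regularity theorem as a black box; the paper's approach, on the other hand, makes the approximation more explicit (one sees exactly which element of~$\Sigma$ works for a given open set). Both arguments are standard and equally valid here.
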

\begin{proof}
  As $X$ is a standard Borel space, we may assume without loss of
  generality that $X$ is the Borel space associated with a separable
  metric space~$Y$. Let $Y' \subset Y$ be a countable dense
  subset. Then the algebra~$\Sigma$ generated by~$\{ U_r(y) \mid y \in
  Y',\ r \in \Q_{>0}\}$ has the claimed property (here, $U_r(y)$
  denotes the open ball of radius~$r$ and centre in~$y$ with respect
  to the metric of~$Y$):

  Indeed, let $\mu$ be a probability measure on~$X$ and let $A \subset
  Y$ be measurable.  By regularity on standard Borel
  spaces~\cite[Theorem~17.10]{kechris2012classical}, we have
  \[
  \mu(A)=
  \inf\bigl\{\mu(U)
  \bigm| \text{$U \subset X$ is open and~$A \subset U$}\bigr\}.
  \]
  Hence, it suffices to prove the claim if $A$ is open.
  If $A$ is open, then 
  $A=\bigcup_{n\in\N}U_n$, where each of the~$U_n$ is of the
  form~$U_r(y)$ with $y\in Y'$ and~$r\in\Q_{>0}$, because $\{
  U_r(y) \mid y \in Y',\ r \in \Q_{>0}\}$ is a basis for the
  topology on~$Y$.
  Then
  \[ \mu(A) = \lim_{n\to\infty} \mu \Bigl( \bigcup_{j=0}^n U_j \Bigr).
  \]
  So, for every~$\varepsilon \in \R_{>0}$, if $n \in \N$ is large
  enough, then $U := \bigcup_{j=0}^n U_j$ satisfies~$\mu(A \symmdiff
  U) = \mu( A \setminus U) < \varepsilon$; moreover, by
  construction,~$U \in \Sigma$.
\end{proof}

\begin{cor}\label{cor:regular_equivariant}
  Let $\Gamma \actson X$ be a standard action. Then there exists a
  $\Gamma$-invariant countable subalgebra~$\Sigma$ of the Borel
  $\sigma$-algebra on~$X$ that is dense with respect to \emph{every}
  probability measure on~$X$.
\end{cor}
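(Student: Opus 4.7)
The plan is to take the countable subalgebra produced by Proposition~\ref{prop:regular} and enlarge it, in a controlled way, to a $\Gamma$-invariant countable subalgebra that retains the denseness property.

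Concretely, first I would apply Proposition~\ref{prop:regular} to obtain a countable subalgebra~$\Sigma_0$ of the Borel $\sigma$-algebra of~$X$ that is dense with respect to every probability measure on~$X$. Since $\Gamma$ is countable (as part of the standing hypothesis that we are dealing with a standard action), the family
\[
\Gamma\cdot\Sigma_0 := \{\gamma\cdot A \mid \gamma\in\Gamma,\ A\in\Sigma_0\}
\]
is still a countable collection of measurable subsets of~$X$, where we use that the action~$\Gamma\actson X$ is by measurable maps, so each $\gamma\cdot A$ is Borel.

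Next, I would let $\Sigma$ be the subalgebra of the Borel $\sigma$-algebra generated by~$\Gamma\cdot\Sigma_0$. Since the Boolean algebra generated by a countable family consists of finite unions of finite intersections of members and their complements, $\Sigma$ is again countable. The algebra $\Sigma$ is $\Gamma$-invariant: each $\gamma\in\Gamma$ acts on the Borel $\sigma$-algebra by an algebra automorphism that permutes the generating set~$\Gamma\cdot\Sigma_0$, and therefore also sends $\Sigma$ to itself.

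Finally, denseness transfers for free from~$\Sigma_0$ to~$\Sigma$: given any probability measure~$\mu$ on~$X$, any measurable $A\subset X$, and any $\varepsilon\in\R_{>0}$, Proposition~\ref{prop:regular} supplies some $A'\in\Sigma_0\subseteq\Sigma$ with $\mu(A\symmdiff A')<\varepsilon$. The only point that might look delicate is ensuring that adding the $\Gamma$-translates and closing under the Boolean operations keeps the family countable; but this is standard and poses no real obstacle once one works with the algebra (rather than the $\sigma$-algebra) generated by $\Gamma\cdot\Sigma_0$.
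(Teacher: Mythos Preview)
Your proof is correct and follows exactly the same approach as the paper: take the countable subalgebra from Proposition~\ref{prop:regular} and pass to the algebra generated by its $\Gamma$-translates. The paper's proof is a single sentence stating precisely this, while you have spelled out the routine verifications (countability, $\Gamma$-invariance, and that denseness is inherited because $\Sigma_0 \subset \Sigma$).
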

\begin{proof}
  Let $\Sigma$ be a subalgebra as provided by
  Proposition~\ref{prop:regular}.  Then the algebra generated by~$\{
  \gamma \cdot A \mid A \in \Sigma,\ \gamma \in \Gamma \}$ has the
  claimed property.
\end{proof}

\subsection{Reduction: countably many parametrised chains}

We now combine the geometric and the dynamical reduction steps:

\begin{prop}\label{prop:countable}
  Let $M$ be an oriented closed connected $n$-manifold, let $\Gamma
  :=\pi_1(M)$, and let $C_*$ be as provided by
  Proposition~\ref{prop:simplices}.  Moreover, let $\Gamma \actson X$
  be a standard action, let $\Sigma$ be
  as in Corollary~\ref{cor:regular_equivariant}, and let
  $\B_\Sigma(\alpha,\Z) := \spanz \{ \chi_A \mid A \in \Sigma \} \subset
  \bz \alpha$.  Then the chain complex
  \[ \Sigma_*(M,\alpha;\Z)
  := \B_\Sigma(\alpha,\Z) \otimes_{\Z \Gamma} C_*
  \]
  has the following property: For every $\alpha$-invariant probability
  measure~$\nu$ on~$X$, we have 
  \[ \ifsvp M {(\alpha,\nu)}
  = \inf
    \bigl\{ \ifsvp c {(\alpha,\nu)}
    \bigm| c \in \Sigma_*(M,\alpha;\Z)
    \text{\ is a strict fundamental cycle}
    \bigr\}.
  \]
\end{prop}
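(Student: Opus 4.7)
The inequality $\geq$ is immediate from Proposition~\ref{prop:strict}, since $\Sigma_*(M,\alpha;\Z)$ is a subcomplex of $C_*(M;\bz \alpha)$ and restricting an infimum to a smaller set can only enlarge it. For the non-trivial direction $\leq$, fix $\nu \in \Prob(\alpha)$ and $\varepsilon > 0$, and, via Proposition~\ref{prop:strict}, start with a strict fundamental cycle $c \in C_n(M;\bz \alpha)$ satisfying $\ifsvp c {(\alpha,\nu)} \leq \ifsvp M {(\alpha,\nu)} + \varepsilon$. The plan is to modify $c$ in two steps: first push the simplex part into the countable subcomplex $C_*$, then approximate the coefficient functions by $\Sigma$-measurable ones.

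\textbf{Step 1 (simplices).} Let $\Phi_*$ be the chain homotopy inverse of norm at most~$1$ from Proposition~\ref{prop:simplices}, and set $c^{(1)} := (\id_{\bz\alpha} \otimes_{\Z\Gamma} \Phi_n)(c) \in \bz\alpha \otimes_{\Z\Gamma} C_n$. Putting $c^{(1)}$ in reduced form, the bound $\|\Phi_*\| \leq 1$ together with the triangle inequality yields $\ifsvp{c^{(1)}}{(\alpha,\nu)} \leq \ifsvp c {(\alpha,\nu)}$. Writing $c = c'_\Z + \partial d'$ with an integral fundamental cycle $c'_\Z \in C_n(M;\Z)$ and $d' \in C_{n+1}(M;\bz \alpha)$, and applying $\id_{\bz\alpha}\otimes_{\Z\Gamma}\Phi_*$ on both sides, I obtain
$$c^{(1)} = c_\Z + \partial d,\quad c_\Z \in \Z \otimes_{\Z\Gamma} C_n,\quad d \in \bz\alpha \otimes_{\Z\Gamma} C_{n+1},$$
where $c_\Z$ is an integral fundamental cycle of $M$ by Corollary~\ref{cor:integralcycles}, and where the left-hand side is unchanged because $\Phi_n$ restricts to the identity on $C_n$.

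\textbf{Step 2 (coefficients).} Write $d = \sum_{i=1}^k g_i \otimes \tau_i$ in reduced form. Each $g_i \in \bz \alpha$ takes finitely many integer values, and so is a finite $\Z$-linear combination of characteristic functions of Borel subsets of $X$. Using Corollary~\ref{cor:regular_equivariant}, I approximate these finitely many Borel sets by members of $\Sigma$ in $\nu$-symmetric difference, obtaining $g_i' \in \B_\Sigma(\alpha,\Z)$ with $\sum_i \int_X |g_i - g_i'|\,d\nu$ as small as desired. Setting $d' := \sum_i g_i' \otimes \tau_i$ and
$$c^{(2)} := c_\Z + \partial d' \in \B_\Sigma(\alpha,\Z) \otimes_{\Z\Gamma} C_* = \Sigma_*(M,\alpha;\Z),$$
I obtain a strict fundamental cycle (observing that $X \in \Sigma$ forces $\Z \subset \B_\Sigma(\alpha,\Z)$, so $c_\Z$ really does live in the subcomplex) whose norm is controlled via the triangle inequality and $\|\partial\| \leq n+2$ by
$$\ifsvp{c^{(2)}}{(\alpha,\nu)} \leq \ifsvp{c^{(1)}}{(\alpha,\nu)} + (n+2)\,\ifsvp{d-d'}{(\alpha,\nu)} \leq \ifsvp M {(\alpha,\nu)} + 2\varepsilon,$$
provided the set approximations are chosen tight enough.

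\textbf{Main obstacle.} The delicate point is arranging Step~1 so that the bounding chain $d$ lies in the countable subcomplex $\bz\alpha \otimes_{\Z\Gamma} C_{n+1}$ rather than merely in $C_{n+1}(M;\bz\alpha)$; without this, the $\ell^1$-approximation in Step~2 would produce a chain outside $\Sigma_*(M,\alpha;\Z)$. The retraction $\Phi_*$ from Proposition~\ref{prop:simplices} does exactly this, because it fixes $C_n$ pointwise (so $c^{(1)}$ is unaltered) while simultaneously pushing the bounding chain into the subcomplex and preserving integral fundamental cycles. Once this confinement is achieved, the remainder is routine: the $\ell^1$-density of $\B_\Sigma(\alpha,\Z)$ in $\bz\alpha$ suffices because reduced form involves only finitely many coefficient functions at a time.
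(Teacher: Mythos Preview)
Your proof is correct and follows essentially the same two-step strategy as the paper: first use the norm-$1$ retraction $\id_{\bz\alpha}\otimes_{\Z\Gamma}\Phi_*$ to reduce to $\bz\alpha\otimes_{\Z\Gamma}C_*$, then use $L^1$-density of $\B_\Sigma(\alpha,\Z)$ in $\bz\alpha$ to approximate the bounding chain and land in $\Sigma_*(M,\alpha;\Z)$. Two cosmetic points: the clause ``the left-hand side is unchanged because $\Phi_n$ restricts to the identity on $C_n$'' is irrelevant (the left side is $c^{(1)}$ by definition of $c^{(1)}$, not because of the retraction property), and you reuse the symbol $d'$ for two different chains.
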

\begin{proof}
  By Proposition~\ref{prop:strict}, we know that $\ifsvp M
  {(\alpha,\nu)}$ can be computed by fundamental cycles in~$\bz \alpha
  \otimes_{\Z \Gamma} C_*(\ucov M;\Z)$. We split the argument into
  the following steps:
  \[ \Sigma_*(M,\alpha;\Z)
  \hookrightarrow \bz \alpha \otimes_{\Z \Gamma} C_*
  \hookrightarrow \bz \alpha \otimes_{\Z \Gamma} C_*(\ucov M;\Z).
  \]
  First, we have
  \[ \ifsvp M {(\alpha,\nu)}
  = \inf
    \bigl\{ \ifsvp c {(\alpha,\nu)}
    \bigm| c \in \bz \alpha \otimes_{\Z \Gamma} C_*
    \text{\ is a fundamental cycle}
    \bigr\},
  \]
  because:
  Clearly, we have~``$\leq$''.
  Conversely, let $\Phi_* \colon C_*(\ucov{M};\Z)\to C_*$ be a $\Z\Gamma$-chain
  homotopy inverse of the inclusion~$C_*\hookrightarrow
  C_*(\ucov{M};\Z)$ with $\lVert\Phi_*\rVert \leq 1$, as provided by
  Proposition~\ref{prop:simplices}.  Then
  \[
  \id_{\bz{\alpha}}\otimes_{\Z\Gamma}\Phi_*
  \colon \bz{\alpha}\otimes_{\Z\Gamma}C_*(\ucov{M};\Z)\to \bz{\alpha}\otimes_{\Z\Gamma}C_*
  \]
  is a chain homotopy inverse of
  $\bz{\alpha}\otimes_{\Z\Gamma}C_*\hookrightarrow
  \bz{\alpha}\otimes_{\Z\Gamma}C_*(\ucov{M};\Z)$ that is compatible with
  integral chains and thus maps fundamental cycles to fundamental
  cycles. Moreover, $\lVert\id_{\bz{\alpha}}\otimes_{\Z\Gamma}\Phi_*\rVert \le1$.
  Therefore, ``$\geq$'' holds as well.
  	
  Second, we have
  \[ \ifsvp M {(\alpha,\nu)}
  = \inf
    \bigl\{ \ifsvp c {(\alpha,\nu)}
    \bigm| c \in \Sigma_*(M,\alpha;\Z) 
    \text{\ is a fundamental cycle}
    \bigr\},
  \]
  because:
  Again, ``$\leq$'' is clear. For the converse estimate, 
  let $c\in\bz{\alpha}\otimes_{\Z\Gamma}C_n$ be a fundamental cycle and let
  $\varepsilon \in \R_{>0}$. We construct a fundamental cycle~$c' \in
  \Sigma_n(M,\alpha;\Z)$ with~$\ifsvp{c' - c}{(\alpha,\nu)} \leq \varepsilon$. 
  As $C_*$ is a chain complex and $c$ is a fundamental cycle, $c$ is
  of the form~$c=c_{\Z}+\partial d$ for some integral fundamental
  cycle~$c_\Z\in \Z\otimes_{\Z\Gamma}C_n\hookrightarrow\bz{\alpha}\otimes_{\Z\Gamma}C_n$ and
  some~$d \in \bz{\alpha}\otimes_{\Z\Gamma}C_{n+1}$. Using
  the density result of Proposition~\ref{prop:regular}, we obtain that
  $\B_\Sigma(\alpha,\Z)$ is dense in~$\bz\alpha$ with respect to the $L^1$-norm
  induced by~$\nu$. Hence, also $\B_\Sigma(\alpha,\Z)
  \otimes_{\Z\Gamma} C_*$ is dense in~$\bz \alpha \otimes_{\Z\Gamma}
  C_*$. In particular, there exists a chain~$d' \in \B_\Sigma(\alpha,\Z)
  \otimes_{\Z\Gamma} C_{n+1}$ with~$\ifsvp {d' -d} {(\alpha,\nu)} \leq
  \varepsilon/(n+2)$. Then
  \[ 
  c' := c_\Z + \partial d'
  \]
  is a fundamental cycle in~$\Sigma_n(M,\alpha;\Z)$ and
  \[ 
  \ifsvp{c' -c}{(\alpha,\nu)}
  = \ifsvp{\partial (d' - d)}{(\alpha,\nu)}
  \leq (n+2) \cdot \ifsvp{d' -d}{(\alpha,\nu)}
  \leq \varepsilon,
  \]
  as desired.
\end{proof}

\subsection{Proof of the ergodic decomposition formula}

We prove Theorem~\ref{thm:ifsvergdec} using the countable setting in
Proposition~\ref{prop:countable}.  We first establish notation:

Let $M$ be an oriented closed connected $n$-manifold, let $\Gamma
:=\pi_1(M)$ and let $C_*$ be as provided by
Proposition~\ref{prop:simplices}. Let $\alpha \colon \Gamma \actson X$
be a standard action and let $\Sigma_*(M,\alpha;\Z)$ be the chain complex
provided by Proposition~\ref{prop:countable}.  Let~$\fc (M,\alpha) \subset
\Sigma_n(M,\alpha;\Z)$ be the set of all fundamental cycles of~$M$
in~$\Sigma_*(M,\alpha;\Z)$.

We use an integral fundamental cycle as baseline: There exists an
integral fundamental cycle~$c_\Z \in \Z \otimes_{\Z\Gamma} C_*$
(Corollary~\ref{cor:integralcycles}).  As $\B_\Sigma(\alpha,\Z)$ contains
all constant $\Z$-valued functions, we can view~$c_\Z$ as a
fundamental cycle in~$\Sigma_*(M,\alpha;\Z)$.  We set~$v := |c_\Z|_1$.

Let $\mu$ be a $\Gamma$-invariant probability measure on~$X$
and let $\beta \colon X \longrightarrow \Erg(\alpha)$ be an ergodic
decomposition of~$\Gamma \actson (X,\mu)$. For~$c \in
\Sigma_*(M,\alpha;\Z)$, we recall that
\begin{align*}
  F_c \colon X & \longrightarrow \R_{\geq 0} \\
  x & \longmapsto \ifsvp c {(\alpha,\beta_x)}
\end{align*}
is an integrable function
(Proposition~\ref{prop:chainergdec}).
By Proposition~\ref{prop:countable}, for all~$x \in X$,
we have
\begin{align*}
  \ifsvp M {(\alpha,\beta_x)}
= \inf_{c \in \fc (M,\alpha)} F_c(x) .
\end{align*}
We emphasise that $\fc(M,\alpha)$ is countable.
As a countable infimum of integrable functions 
the function
\begin{align*}
	F \colon X & \longrightarrow \R_{\geq 0} \\
	x & \longmapsto \inf_{c\in \fc(M,\alpha)} F_c(x) = \ifsvp M {(\alpha,\beta_x)}
\end{align*}
is measurable and bounded by~$\Vert M \rVert_\Z$;
hence, $F$ is $\mu$-integrable and 
\begin{align*}
	\int_X \ifsvp M {(\alpha,\beta_x)} \;d\mu(x)
	& = \int_X F \; d\mu
	& \text{(definition of~$F$)}
        \\
        & = \ifsvp M {(\alpha,\mu)}.
        & \text{(Lemma~\ref{lem:selfref} below)}
\end{align*}        

It remains to show the second equality, i.e., that we
can indeed swap taking this specific infimum with integration.

\begin{lem}\label{lem:selfref}
  In the situation above, we have
  \[ \int_X  F \; d\mu
  = \ifsvp M {(\alpha,\mu)}.
  \]
\end{lem}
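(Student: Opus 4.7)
The plan is to prove the two inequalities separately. The estimate~$\int_X F\,d\mu \leq \ifsvp M {(\alpha,\mu)}$ is immediate: for any $c \in \fc(M,\alpha)$ the pointwise bound $F \leq F_c$ combined with Proposition~\ref{prop:chainergdec} gives $\int_X F\,d\mu \leq \ifsvp c {(\alpha,\mu)}$, and taking the infimum over~$\fc(M,\alpha)$ and invoking Proposition~\ref{prop:countable} yields the estimate.

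For the reverse inequality, I would exploit that $\fc(M,\alpha)$ is countable (as~$\Sigma_*(M,\alpha;\Z)$ is a countable $\Z$-module) and enumerate it as~$\{c_1,c_2,\dots\}$ with~$c_1:=c_\Z$. For each~$N\in\N$ I would partition~$X$ into the \emph{arg-min} pieces
\[
A_k^N:=\Bigl\{x\in X\ \Bigm|\ F_{c_k}(x)=\min_{1\leq j\leq N}F_{c_j}(x)\Bigr\}\setminus\bigcup_{j<k}A_j^N,\quad k=1,\dots,N.
\]
Since each~$F_{c_j}(x)=\ifsvp{c_j}{(\alpha,\beta_x)}$ depends on~$x$ only through~$\beta_x$ and since $\beta_{\gamma\cdot x}=\beta_x$ for every~$\gamma\in\Gamma$, the sets~$A_k^N$ are $\Gamma$-invariant, measurable (by Proposition~\ref{prop:chainergdec}), and $\beta$-compatible.

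Then I would build a fundamental cycle~$c^N$ by iterating Remark~\ref{rem:decompinv}: set~$d_0:=c_\Z$ and $d_k:=\chi_{A_k^N}\cdot c_k+\chi_{X\setminus A_k^N}\cdot d_{k-1}$ for~$k=1,\dots,N$; then $c^N:=d_N$ is a strict fundamental cycle. For~$x\in A_{k_0}^N$, a short induction (using $\beta_x(A_{k_0}^N)=1$ and $\beta_x(A_k^N)=0$ for~$k\ne k_0$, which follows from Remark~\ref{rem:compatiblemeasure}) shows that $c^N$ coincides with~$c_{k_0}$ up to functions vanishing $\beta_x$-almost everywhere, so~$F_{c^N}(x)=F_{c_{k_0}}(x)=\min_{k\leq N}F_{c_k}(x)$. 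Proposition~\ref{prop:chainergdec} then gives
\[
\ifsvp M {(\alpha,\mu)}\leq\ifsvp{c^N}{(\alpha,\mu)}=\int_X\min_{k\leq N}F_{c_k}(x)\,d\mu(x),
\]
and dominated convergence (the integrands decrease pointwise to~$F$ and are bounded by the constant~$F_{c_\Z}\equiv v$) yields the desired estimate as~$N\to\infty$.

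The main obstacle will be establishing the pointwise identity~$F_{c^N}(x)=\min_{k\leq N}F_{c_k}(x)$. This rests on two cooperating facts: $\Gamma$-invariance of the~$A_k^N$, so that the bimodule action in Remark~\ref{rem:decompinv} is well defined and each~$d_k$ is a strict fundamental cycle; and $\beta$-compatibility, so that Remark~\ref{rem:compatiblemeasure} transfers the partition structure to the individual ergodic measures~$\beta_x$, making~$c^N$ reduce to the chosen minimiser~$c_{k_0(x)}$ in each~$\linfz{\alpha,\beta_x}$.
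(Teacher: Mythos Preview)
Your proof is correct and follows essentially the same approach as the paper: patch the~$c_k$ together along a $\beta$-compatible $\Gamma$-invariant partition of~$X$ (via Remark~\ref{rem:decompinv}), use Remark~\ref{rem:compatiblemeasure} to identify~$F_{c^N}$ on each piece, and pass to the limit. The only cosmetic difference is that the paper uses $\varepsilon$-near sets $\{x:F_{c_n}(x)\le F(x)+\varepsilon\}$ together with a remainder term~$\chi_{X\setminus A}\cdot c_\Z$, whereas your arg-min partition covers all of~$X$ at once and lets dominated convergence replace the explicit~$\varepsilon$ bookkeeping; both routes rest on the same key inputs (countability of~$\fc(M,\alpha)$, Propositions~\ref{prop:chainergdec} and~\ref{prop:countable}).
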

\begin{proof}
  On the one hand, by definition, $F \leq F_c$
  for all~$c \in \fc(M,\alpha)$. In particular,
  monotonicity of the integral gives
  \begin{align*} \int_X F \; d\mu
    & \leq \inf_{c \in \fc(M,\alpha)} \int_X F_c \;d\mu.
    \\
    & = \inf_{c \in \fc(M,\alpha)}  \ifsvp{c}{(\alpha,\mu)} 
    & \text{(Proposition~\ref{prop:chainergdec})}
    \\
    & = \ifsvp M {(\alpha,\mu)}.
    & \text{(Proposition~\ref{prop:countable})}
  \end{align*}

  For the converse inequality, we use the ``self-referentiality'' of
  the construction to produce fundamental cycles in~$C_*(M;\bz \alpha)$
  with ``small'' norm: For notational convenience, we enumerate
  the countable set~$\fc(M,\alpha)$ as~$c_0, c_1, \dots$.  Let
  $\varepsilon \in \R_{>0}$. For~$n \in \N$, we consider the set
  \[ A_n :=
  \bigl\{ x \in X
  \bigm| \ifsvp {c_n}{(\alpha,\beta_x)}
  \leq \ifsvp M {(\alpha,\beta_x)} + \varepsilon
  \bigr\}.
  \]
  Then, $A_n$ is measurable and $\Gamma \cdot A_n = A_n$.
  Moreover, $\bigcup_{n \in \N} A_n  =X$ by Proposition~\ref{prop:countable}.
  Let $N\in \N$ be so large that
  $ A := \bigcup_{n=0}^N A_n$ satisfies~$\mu(X \setminus A) \leq \varepsilon$.
  We then consider the pairwise disjoint family~$(\overline A_n)_{n \in \{0,\dots,N\}}$
  given by~$\overline A_0 := A_0$ and
  \[ \fa{n \in \{0,\dots, N-1\}}
  \overline A_{n+1}
  := A_{n+1} \setminus \bigcup_{j=0}^n \overline A_j.
  \]
  Then, also the~$\overline A_n$ are measurable and satisfy~$\Gamma
  \cdot \overline A_n = \overline A_n$.
  By construction, $A = \bigsqcup_{n=0}^N \overline A_n$ and 
  \[ c:= \sum_{n=0}^N \chi_{\overline A_n} \cdot c_n + \chi_{X\setminus A} \cdot c_\Z
  \in C_*\bigl( M; \bz \alpha \bigr)
  \]
  is a strict fundamental cycle of~$M$ (Remark~\ref{rem:decompinv}).
  We show that $\ifsvp c {(\alpha,\mu)}$ is small enough:
  By construction, the sets~$\overline A_n$ are $\beta$-compatible.
  For all~$n \in \{0,\dots,N\}$ and all~$x \in \overline A_n$,
  we thus have~$\beta_x(\overline A_n) = 1$ (Remark~\ref{rem:compatiblemeasure})
  and so
  \[ F_c(x) = \ifsvp{c_n}{(\alpha,\beta_x)}
  \leq \ifsvp M {(\alpha,\beta_x)} + \varepsilon
  \leq F(x) + \varepsilon;
  \]
  furthermore, 
  $F_c|_{X\setminus A}=\abs{c_\Z}_1$. We conclude
  that
  \begin{align*}
    \ifsvp M {(\alpha,\mu)}
    & \leq 
    \ifsvp c {(\alpha,\mu)}
    = 
    \int_X  F_c \; d\mu
    = \sum_{n=0}^N \int_{\overline A_n}  F_c \; d\mu
    + \int_{X \setminus A}  F_c \; d\mu
    \\
    & \leq \sum_{n = 0}^N \int_{\overline A_n}  F + \varepsilon \; d\mu
    + \int_{X \setminus A} v \; d\mu
    \\
    & \leq \sum_{n=0}^N \int_{\overline A_n}  F\; d\mu
    + \sum_{n=0}^N \mu(\overline A_n) \cdot \varepsilon
    + \mu(X \setminus A) \cdot v
    \\
    & \leq \int_X  F \; d\mu + \mu (A) \cdot \varepsilon + \varepsilon \cdot v
    \leq \int_X  F \; d\mu + \varepsilon \cdot (1 + v).
  \end{align*}
  Therefore, taking~$\varepsilon \to 0$ shows
  that $\ifsvp M {(\alpha,\mu)} \leq \int_X F \; d\mu$.
\end{proof}

This finishes the proof of Theorem~\ref{thm:ifsvergdec}.

Finally, we mention two straightforward directions of generalisations:

\begin{rem}[general spaces]\label{rem:general}
  The following version of Theorem~\ref{thm:ifsvergdec} also holds:
  Let $M$ be a path-connected topological space that has the homotopy
  type of a countable CW-complex and that admits a universal
  covering. Let $\Gamma := \pi_1(M)$, let $\alpha\colon\Gamma \actson
  X$ be a standard action, and let $\mu \in \Prob(\alpha)$. If $\zeta
  \in H_*(M;\linfz {\alpha,\mu})$ is an integral homology class
  (i.e., coming from~$H_*(M;\Z)$) and $\beta
  \colon X \longrightarrow \Erg(\alpha)$ is an ergodic decomposition
  of~$(\alpha,\mu) \colon \Gamma \actson X$, then
  \[ \ifsvp \zeta {(\alpha,\mu)}
  = \int_X \ifsvp \zeta {(\alpha,\beta_x)} \;d\mu(x).
  \]
  Indeed, all of our proofs work on this level of generality. We
  restricted ourselves to the manifold/fundamental class case to keep
  the notation somewhat lighter.
\end{rem}

\begin{rem}[general decompositions]
  Moreover, decomposition formulas as in Theorem~\ref{thm:ifsvergdec}
  and Remark~\ref{rem:general} also hold for other decompositions of
  the base measure (as in Definition~\ref{def:ergdec}, but with maps
  to~$\Prob(\alpha)$ instead of~$\Erg(\alpha)$), not only for the
  ergodic decomposition. Indeed, we never used the fact that the
  measures occurring in the decomposition are ergodic.
\end{rem}

\bibliographystyle{abbrv}
\bibliography{bib}

\end{document}